\newtheorem{theorem}{Theorem}[section]
\newtheorem{lemma}{Lemma}[section]
\theoremstyle{definition}
\newtheorem{definition}{Definition}[section]
\newtheorem{example}{Example}[section]
\theoremstyle{remark}
\newtheorem{remark}{Remark}[section]
\newcommand{\R}{\mathbb{R}}
\newcommand{\N}{\mathbb{N}}
\newcommand{\diam}{\operatorname{diam}}
\newcommand{\T}{\mathbb{T}}
\newcommand{\F}{\mathcal{F}}
\begin{document}
\title[On triangle-like characterizations]
{On characterization of functions preserving metric-type conditions via triangular and polygonal structures}

\author[F. Turobo\'s ]
{Filip Turobo\'s}
\address[F. Turobo\'s]{
Institute of Mathematics, \L\'od\'z University of Technology,
W\'olcza\'nska 215,
 93-005 \L\'od\'z, Poland}
\email[F. Turobo\'s]{filip.turobos@gmail.com}

\keywords{metric-preserving functions; quasimetric spaces; metric spaces; ultrametric spaces; triangle inequality; relaxed polygonal inequality}
\subjclass[2010]{Primary 54E25; Secondary 26A30, 26A48.}
\begin{abstract}
Following the train of thought from our previous paper %\cite{racsam} 
we revisit the theorems of Pongsriiam and Termwuttipong by further developing their characterization of certain property-preserving functions using the so-called triangle triplets. We develop  more general analogues of disjoint sum lemmas for broader classes of metric-type spaces and we apply these to extend results of Bors\`ik an Dobo\v{s} %\cite{BorsikDobos} 
as well as those obtained by Khemaratchatakumthorn and Pongsriiam. %\cite{Tammatada2018}
 As a byproduct we obtain methods of generating non-trivial and infinite strong $b$-metric spaces which are not metric.
\end{abstract}

\maketitle

\section{Introduction}

The first appearances of metric transforms in the literature date back to 1935, where Wilson \cite{Wilson} has investigated functions preserving the triangle inequality and some other properties. Most researchers in this field agree that the second important paper in this field was due to Sreenivasan, twelve years later \cite{srenivasan}. Additionally, the well-known monograph of Kelley \cite{Kelley} presents some results from this theory as exercises.

The theory of metric preserving functions was developed by Bors\'{i}k, Dobo\v{s} and Piotrowski \cite{BorsikDobos,Doborsik,DobosPiotrowskiRemarks,DoboPiotro,Dobos_Edu,DobosEuclidean}.
See also lectures on this theory by Dobo\v{s} \cite{DobosLectures} and an introductory article by Corazza \cite{Corazza1999}. Of course this list is by no means exhaustive, as many other mathematicians, including Pokorn\'{y} \cite{Pokorny} and Vallin \cite{VallinSubset,VallinDeriv,VallinCont}, had contributed to this topic. 

In 2013, Petru\c{s}el et al. \cite{Petrusel2013} have shown applications of metric transforms in metric fixed point theory.
Another research on this topic which emphasizes the fixed point theory, was simultaneously conducted by Kirk and Shahzad \cite{Kirk2013}, well known for their broad contributions to the field of metric fixed point theory. In their monograph \cite{KirkShahzad}, they highlighted the problem of very few non-trivial, natural examples of strong $b$-metric spaces in the literature. We would like to tackle this issue to some extent in this paper. Also, another follow-up in this topic was made by Pongsriiam and Termwuttipong \cite{Pongsriiam2014}, who generalized the results of Kirk and Shahzad from their paper \cite{Kirk2013}.

This group of Thai mathematicians, namely, Khemaratchatakumthorn, Pongsriiam, Samphavat and Termwuttipong \cite{Tammatada2018,Pongsriiam,wdistances,Pongsriiam2014,Termutti,Samphavat} have made a huge contribution to extension of theory of metric preserving functions. The functions they have investigated were preserving some other (not necessarily metric) axioms of the distance-type functions. 
This general name refers to all semimetrics satisfying a certain condition resembling the triangle inequality.  
In particular, this group of mathematicians  focused mainly on $b$-metric spaces and ultrametric spaces. It is worth noticing that the latter class of spaces was also investigated by other researchers, see \cite{DovgosheyCombinatorial,Dovgoshey}.

Although the concept of a $b$-metric space (also called the quasimetric space) was introduced as early as 1937 by Frink \cite{Frink}, it seemed to be out of favour for quite a long time.  
Recently, $b$-metrics and many other generalizations of the concept of metric space attract many scientists and appear in an increasing number of fields. For a survey on these generalizations and their applications, refer to a paper of Van An et al. \cite{VanAn}. For review of recent progress in the fixed point theory in $b$-metric spaces, see the survey article of Karap\i{}nar \cite{Karapinar}. We also recommend a paper of Cobza\c{s} \cite{Cobzas} and a broad list of references therein.

The purpose of this paper is to extend the already known  characterizations (based on triangle-like structures) of some classes of functions which preserve certain properties of semimetric spaces. 
These extensions will include both strong $b$-metric spaces and the ones satisfying condition known as relaxed polygonal inequality, introduced by Fagin et al. \cite{Fagin}. 
As we approach this issue, we also obtain as a byproduct some equivalents of wide-known results on combining multiple metric spaces into a single one. These supporting lemmas also partially answer the aforementioned issue raised by Kirk and Shahzad.

We then follow with a corresponding result for functions connected with semimetric spaces satisfying the relaxed polygonal inequality. We discuss some of the obtained answers and provide some examples to illustrate both the notions used and theorems obtained.

\section{Preliminaries}
We begin with the definition of a semimetric space, as this is one of the primary concepts we will need to use later on.

\begin{definition}
A \textit{semimetric space} is a non-empty set $X$ equipped with a function $d:X\times X\to [0,+\infty)$, satisfying the following conditions:
\begin{itemize}
    \item[(S1)] For all $x,y\in X$, $d(x,y)=0$ if and only if $x=y$;
    \item[(S2)] For all $x,y\in X$, $d(x,y)=d(y,x)$.
\end{itemize}
Then, the function $d$ is called a \textit{semimetric}.
\end{definition}

The following definition introduces various properties which a semimetric space may possess.

\begin{definition}
Let $(X,d)$ be a semimetric space. If a function $d$ satisfies
\begin{itemize}
\item[(U)] $d(x,z)\leqslant \max\{d(x,y),d(y,z) \}$ for any $x,y,z\in X$,\\ 
		then we call $(X,d)$ an \textit{ultrametric space};
\item[(M)] $ d(x,z)\leqslant d(x,y)+d(y,z)$ for all $x,y,z\in X$,\\ 
		then $(X,d)$ is a \textit{metric space};
\item[(S)] $d(x,z)\leqslant Kd(x,y) + d(y,z)$ for any $x,y,z\in X$,\\ where $K\geqslant 1$ is fixed, then $(X,d)$ is called a \textit{strong} $b$\textit{-metric space};
\item[(P)] $d(x_0, x_n) \leqslant K \cdot \sum_{i=1}^n d(x_{i-1},x_i)$ for any $n\in\mathbb{N}$ and $x_0,x_1,\dots, x_n\in X$ \\ where $K\geqslant 1$ is fixed, then we say that the space $(X,d)$ satisfies the $K$\textit{-relaxed polygonal inequality} ($K$-rpi for short);
\item[(B)] $d(x,z)\leqslant K\left(d(x,y) + d(y,z) \right)$ for any $x,y,z\in X$,\\ where $K\geqslant 1$ is fixed, then $(X,d)$ is called a $b$-\textit{metric space}.
%	\item[(R)] $d(x,z)\leqslant \psi\left( d(x,y) + d(y,z) \right)$ for any $x,y,z\in X$,\\ where $\psi:[0,+\infty)\to[0,+\infty]$ is nondecreasing, continuous at the origin and $\psi(0)=0$, then $(X,d)$ is said to satisfy the \textit{Chittenden condition}. 
	\end{itemize}
	The coefficients $K$, which appear in definitions (S), (P) and (B) will be called the \textit{relaxation constants}.
\end{definition}

Throughout the article, we will mainly focus on functions which, in a sense, \textit{preserve} or \textit{transform} the aforementioned inequalities (although sometimes at the cost of the topological structure). Therefore, let us introduce an appropriate definition:

\begin{definition}
    Let ($A_1$), ($A_2$) be two properties of a semimetric space. We say that $f:[0,+\infty) \to [0,+\infty)$ is \textit{($A_1$)-($A_2$)-preserving}, if for any semimetric space $(X,d)$ satisfying the property ($A_1$), $f\circ d$ is a semimetric and the space $(X,f\circ d)$ satisfies the condition ($A_2$).
\end{definition}

	For more convenient presentation of inclusions between given classes of functions, we denote by $P_{A_1, A_2}$ the class of all ($A_1$)-($A_2$)-preserving functions. Following this train of thought, by $P_{A_1}$ we will denote the class of all ($A_1$)-($A_1$)-preserving functions. 
	
	Now, let us introduce some definitions connected with \textit{behaviour} of property-preserving functions.
	
	\begin{definition}
	A function $f:[0,+\infty)\to [0,+\infty)$ is said to be 
	\begin{itemize}
	\item \textit{amenable} if $f^{-1}[\{0\}]=\{0\}$;
	\item \textit{subadditive} if $f(a+b)\leqslant f(a)+f(b)$ for all $a,b\in [0,+\infty)$;
	\item \textit{quasi-subadditive} if $f(a+b)\leqslant K\cdot (f(a)+f(b))$ for some $K\geqslant 1$ and all $a,b\in [0,+\infty)$;
	\item \textit{concave} if $f(ta+(1-t)b)\leqslant tf(a)+(1-t)f(b)$ for every $t\in [0,1]$ and $a,b\in [0,+\infty)$; 
	\item \textit{tightly bounded} if there exists $v>0$ such that $f(a)\in[v,2v]$ whenever $a>0$.
	\end{itemize}
	\end{definition}
	
	Below we quote a few results from papers of originators of this topic \cite{BorsikDobos,Corazza1999,DobosLectures} composed into two lemmas, which will be used in the latter part of this paper.
	
\begin{lemma}[\textbf{Sufficient conditions for metric preservation}]\label{huek}
	Let $f:[0,+\infty)\to[0,+\infty)$ be an amenable function. If any of the conditions below is satisfied:
\begin{itemize}
	\item[(i)] $f$ is concave;
	\item[(ii)] $f$ is increasing and subadditive;
	\item[(iii)] $f$ is amenable and tightly bounded;
\end{itemize}
then $f\in P_M$, i.e., $f$ is metric preserving.
\end{lemma}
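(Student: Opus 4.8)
The plan is to show that under any of the three hypotheses, $f\circ d$ is a semimetric and satisfies the triangle inequality whenever $(X,d)$ is a metric space. Since $f$ is amenable, axiom (S1) is immediate: $f(d(x,y))=0$ iff $d(x,y)=0$ iff $x=y$; and (S2) is inherited from the symmetry of $d$. So in every case the work reduces to verifying that $f(d(x,z))\leqslant f(d(x,y))+f(d(y,z))$ for all $x,y,z\in X$. The key observation, which I would isolate first, is that for any $a,b\geqslant 0$ with (say) $a\leqslant b$ and any $c\in[0,a+b]$, one has the implication: if $f$ restricted to the relevant arguments satisfies the "triangle triplet'' condition $f(c)\leqslant f(a)+f(b)$ whenever $c\leqslant a+b$, then $f$ is metric preserving — because in a metric space $d(x,z)\leqslant d(x,y)+d(y,z)$, so setting $a,b$ to be the two larger-indexed distances we are exactly in this situation. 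Thus it suffices to prove, under each hypothesis, that $c\leqslant a+b$ implies $f(c)\leqslant f(a)+f(b)$.

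Case (i), $f$ concave: first note an amenable concave function $f:[0,\infty)\to[0,\infty)$ is automatically nondecreasing (if it ever decreased, concavity together with $f\geqslant 0$ and $f(0)=0$ would force $f$ eventually negative) and subadditive (concavity with $f(0)=0$ gives $f(\lambda t)\geqslant \lambda f(t)$ for $\lambda\in[0,1]$, hence $f(a)+f(b)\geqslant \tfrac{a}{a+b}f(a+b)+\tfrac{b}{a+b}f(a+b)=f(a+b)$ when $a+b>0$). So case (i) reduces to case (ii). Case (ii), $f$ increasing and subadditive: if $c\leqslant a+b$ then $f(c)\leqslant f(a+b)\leqslant f(a)+f(b)$, using monotonicity then subadditivity — done. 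Case (iii), $f$ amenable and tightly bounded: pick $v>0$ with $f(t)\in[v,2v]$ for all $t>0$. If $c=0$ there is nothing to prove since $f(c)=0$. If $c>0$, then since $c\leqslant a+b$ at least one of $a,b$ is positive; if both are positive then $f(a)+f(b)\geqslant v+v=2v\geqslant f(c)$, and if exactly one is positive, say $a>0=b$, then $c\leqslant a$ forces $c>0$, so $f(a)+f(b)=f(a)+0=f(a)\geqslant v$...

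here I need to be slightly more careful: tight boundedness alone gives $f(a)=f(a)\in[v,2v]$ and $f(c)\in[v,2v]$, which does not immediately yield $f(c)\leqslant f(a)$. The fix is that when $b=0$ we must have $y=z$, hence $d(x,z)=d(x,y)$, so $c=a$ and $f(c)=f(a)=f(a)+f(b)$ trivially; the genuinely nondegenerate triples are those with $a,b>0$, handled above. So I would phrase case (iii) directly in terms of the metric space: given $x,y,z$ distinct in pairs the bound $f(d(x,z))\leqslant 2v\leqslant f(d(x,y))+f(d(y,z))$ holds, and degenerate triples (two points equal) make the inequality trivial. The main obstacle is this bookkeeping around degenerate configurations and establishing monotonicity/subadditivity of concave amenable functions cleanly; none of it is deep, but it must be stated precisely so that (iii) and the reduction of (i) to (ii) are airtight.
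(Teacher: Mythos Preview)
The paper does not supply its own proof of this lemma; it is explicitly quoted from the literature (Bors\'{i}k--Dobo\v{s}, Corazza, Dobo\v{s}'s lectures) as a known result to be used later. So there is nothing in the paper to compare your argument against.

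Your argument itself is correct, and is essentially the standard one found in those references. A couple of minor remarks. First, your reduction ``it suffices to show $c\leqslant a+b\Rightarrow f(c)\leqslant f(a)+f(b)$'' is a genuine sufficient condition but, as you discovered, is strictly stronger than what is needed for (iii): a tightly bounded amenable $f$ need not satisfy it for arbitrary triples (take $b=0$, $0<c<a$ with $f(c)>f(a)$). Your repair---arguing directly on $x,y,z\in X$ and separating the pairwise-distinct case from the degenerate one---is exactly the right move, and you should present (iii) that way from the start rather than via the triplet reduction. Second, in reducing (i) to (ii) you are using the standard inequality direction for concavity, $f(ta+(1-t)b)\geqslant tf(a)+(1-t)f(b)$; note that the paper's Definition~2.4 states the reverse inequality, which is evidently a typo (with the inequality as printed one gets convexity and superadditivity, and the lemma would be false). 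Your monotonicity argument for concave nonnegative $f$ with $f(0)=0$ is fine once made precise: if $f(s)>f(t)$ for some $s<t$, then the secant slope on $[s,t]$ is negative, and concavity forces all subsequent secant slopes to be at most this, driving $f$ below zero on $[t,\infty)$, contradicting $f\geqslant 0$.
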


\begin{lemma}[\textbf{Necessary conditions for metric preservation}]\label{imahusk}
	Let $f\in P_M$. Then $f$ is both amenable and subadditive.
\end{lemma}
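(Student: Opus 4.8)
The plan is to test $f$ against a couple of very small finite metric spaces, chosen so that a failure of amenability or of subadditivity would immediately contradict the (semi)metric axioms satisfied by the transformed distance. No external input beyond the definitions is needed; everything reduces to exhibiting the right examples.

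First I would establish amenability. Since $f\in P_M$, for every metric space $(X,d)$ the function $f\circ d$ is in particular a semimetric, hence satisfies (S1); evaluating (S1) at a pair of equal points (e.g.\ in a one-point space) forces $f(0)=0$. Next, fix an arbitrary $t>0$ and take the two-point space $X=\{p,q\}$ with $d(p,q)=t$, which is trivially metric. Then $f\circ d$ is a metric, and (S1) applied to the distinct points $p,q$ gives $f(t)=(f\circ d)(p,q)\neq 0$. Since $t>0$ was arbitrary, $f^{-1}[\{0\}]=\{0\}$.

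For subadditivity, let $a,b\in[0,+\infty)$. If $a=0$ or $b=0$, the inequality $f(a+b)\leqslant f(a)+f(b)$ collapses to an equality once we know $f(0)=0$, so I may assume $a,b>0$. The key construction is the three-point ``collinear'' space $X=\{x,y,z\}$ with $d(x,y)=a$, $d(y,z)=b$, $d(x,z)=a+b$, completed by symmetry and zeros on the diagonal. A short check of the three triangle inequalities — the only tight one being $d(x,z)=d(x,y)+d(y,z)$ — confirms that $(X,d)$ is a metric space, so by hypothesis $(X,f\circ d)$ is metric as well. Its triangle inequality at the triple $(x,y,z)$ reads precisely $f(a+b)\leqslant f(a)+f(b)$, which is the desired inequality.

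I do not expect any real obstacle: the whole argument is a matter of producing suitable test spaces. The only points deserving a moment's care are verifying that the collinear triple genuinely satisfies the triangle inequality (so that $f\in P_M$ is entitled to act on it) and disposing of the degenerate arguments $a=0$ and $b=0$ separately.
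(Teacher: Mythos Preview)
Your argument is correct and is precisely the standard proof found in the cited literature (Bors\'{i}k--Dobo\v{s}, Corazza, Dobo\v{s}'s lectures). Note, however, that the paper itself does not prove this lemma: it is quoted as a known result from those references, so there is no in-paper proof to compare against.
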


For a broader survey of results in the topic of metric preserving functions, see the aforementioned lectures of Dobo\v{s} \cite{DobosLectures} as well as a nice paper summarizing some recent results by Samphavat et al. \cite{Samphavat}.
	
\section{Construction tools for $b$-metric and strong $b$-metric spaces}

It is well known that a disjoint union of two metric spaces, say $(X_1,d_1)$ and $(X_2,d_2)$, is metrizable. In case when we deal with spaces of finite diameter the case is easy. Otherwise, we may replace both metrics by bounded metrics equivalent to them (in particular, this mapping $t\mapsto \frac{t}{t+1}$ is an example of (M)-preserving function).
\[
d_i^\prime (x,y):= \frac{d_i(x,y)}{1+d_i(x,y)} \text{ for } x,y\in X_i, i=1,2.
\]
The resulting spaces obviously are not isometric to the original ones in the general case. Nevertheless, they retain the same topology. In the case of more general distance-type functions, tampering with their values may alter the relaxation constants from their definitions, as the following example depicts. 

\begin{example}
Let $X:=\{1,2,3\}$ and $d$ be a semimetric satisfying:
\[
d(1,2)=d(2,3) = 1 \qquad d(1,3)=4.
\]
It can be easily seen that remaining values of $d$ stem from the axioms (S1) and (S2). A simple calculation also shows that $d$ satisfies condition (B) for $K=2$. 

Now, as previously, let $d^\prime(x,y):= \frac{d(x,y)}{1+d(x,y)}$ for all $x,y\in X$. Thus,
\[
d^\prime(1,2)=d^\prime(2,3) = \frac{1}{2} \qquad d^\prime(1,3)=\frac{4}{5}.
\]
One can immediately notice that such modification of the distances of this simple $b$-metric space resulted in transforming it into a metric space.
\end{example}

\begin{remark}
By changing the original values of $d(1,2)$ and $d(2,3)$ from $1$ to $\frac{1}{4}$, $(X,d)$ would be a $b$-metric space with relaxation constant $K=8$. In such case $d^\prime(1,2)=d^\prime(2,3) = \frac{1}{5}$. 

This means that $(X,d^\prime)$ would no longer be a metric space, but rather a $b$-metric one with relaxation constant $K=2$.
\end{remark}

If we restrict ourselves to the case of finite diameter, then we are able to perform the operation of joining two spaces in such way, that the result satisfies the same type of inequality with the relaxation constant equal to the greater one from the initial spaces.

Let us now put this claim in the formal setting. For a semimetric space $(X,d)$ and its subset $A\subset X$, by $\diam_{d}(A)$ we will denote its diameter with respect to the semimetric $d$, i.e.
\[
\diam_{d}(A) := \sup \{d(x,y)\ : \ x,y\in A \}.
\]
We will skip the subscript in situations where it does not lead to any ambiguities.

\begin{lemma}[\textbf{Concatenation lemma for $b$-metric and strong $b$-metric spaces}]\label{concatenlemma}

	Let $(X_1,d_1)$, $(X_2,d_2)$ be a pair of disjoint, strong $b$-metric spaces ($b$-metric spaces respectively) with relaxation constants $K_1$, $K_2$.
Additionally, assume that both spaces have finite diameter, i.e., $r_1:=\diam_{d_1}(X_1)<\infty$ and $r_2:=\diam_{d_2}(X_2)<\infty$ and $X_1\cup X_2$ have at least three elements. 	
	Let $X:=X_1\cup X_2$. Then, there exists an extension of $d_1,\ d_2$, namely $d:X\times X \to [0,+\infty)$ which is a strong $b$-metric space (respectively $b$-metric space) with relaxation constant $K:=\max \{K_1,K_2\}$ and $\diam_d(X)=\max\{r_1, r_2\}$.
\end{lemma}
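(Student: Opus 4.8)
The plan is to define $d$ on $X\times X$ by keeping the original distances inside each piece and assigning a single ``bridge'' value to every cross pair. Concretely, set $m:=\max\{r_1,r_2\}$ and, for $x\in X_i$, $y\in X_j$ with $i\neq j$, put $d(x,y):=m$ (and of course $d(x,y):=d_i(x,y)$ when $x,y\in X_i$, and $d(x,x)=0$). Since $m>0$ whenever $X$ has at least three elements (one of the pieces must contain two distinct points, so some $r_i>0$), axiom (S1) holds; symmetry (S2) is inherited on each piece and is built into the constant cross value. The diameter claim is then immediate: all distances lie in $[0,m]$ and the value $m$ is attained (either as a cross distance or as an in-piece distance realizing $r_i=m$), so $\diam_d(X)=m=\max\{r_1,r_2\}$.

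The substance is verifying the relaxation inequality for every triple $x,y,z\in X$ (for the $b$-metric case) and, in the strong case, the asymmetric version $d(x,z)\leqslant K\,d(x,y)+d(y,z)$ with $K=\max\{K_1,K_2\}$. I would split into cases by how $x,y,z$ distribute among $X_1,X_2$. If all three lie in the same piece, the inequality is exactly the one that piece already satisfies, with its own constant $K_i\leqslant K$. If all three lie in one piece except this is impossible to reduce further, the remaining cases are: (a) $x,z$ in the same piece but $y$ in the other; (b) $x,z$ in different pieces. In case (a) the right-hand side contains at least one cross term equal to $m$, so it is $\geqslant m\geqslant d(x,z)$ (using $d(x,z)\leqslant r_i\leqslant m$), and since $K\geqslant 1$ this only helps. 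In case (b) we have $d(x,z)=m$; now $x,y,z$ cannot all be in distinct pieces (there are only two), so at least one of the pairs $\{x,y\}$, $\{y,z\}$ is a cross pair, contributing a term equal to $m$ on the right, and again $K\geqslant 1$ closes the estimate. The $K$-rpi-style polygonal version, should one want it, follows by the same bookkeeping: a path from $x_0\in X_i$ to $x_n\in X_j$ either stays in one piece (use that piece's inequality) or crosses, in which case some edge has length $m\geqslant d(x_0,x_n)$.

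For the strong $b$-metric case I must be slightly more careful because the inequality $d(x,z)\leqslant K\,d(x,y)+d(y,z)$ is not symmetric in $x$ and $z$, so the case analysis should be run over ordered triples; but the same three situations arise and the same bound $m$ on the relevant right-hand-side term settles each one. The only genuinely delicate point is case (a) with $x,z$ in, say, $X_1$ and $y\in X_2$: here the right-hand side is $K\,m+m$ or $m+m$ depending on the orientation, and we need $d(x,z)\leqslant r_1\leqslant m\leqslant$ that quantity, which is clear. Thus I do not expect a real obstacle; the mild subtlety is just making sure the choice $d(x,y)\equiv m$ on cross pairs is simultaneously large enough to dominate every in-piece distance (so cross-containing right-hand sides win trivially) and small enough not to inflate the diameter beyond $\max\{r_1,r_2\}$ — which is exactly why $m=\max\{r_1,r_2\}$, rather than anything larger, is the right constant.
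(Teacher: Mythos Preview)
Your proof is correct and follows the same scheme as the paper's: assign a single constant value to all cross-pairs and then run the case analysis on how the triple $x,y,z$ distributes between $X_1$ and $X_2$. The only cosmetic difference is that you take the cross-distance to be $m=\max\{r_1,r_2\}$, whereas the paper uses the smaller value $\dfrac{m}{1+K}$ (so that $Kd(x,z)+d(z,y)=(1+K)\cdot\dfrac{m}{1+K}=m\geqslant d_1(x,y)$ in the mixed case); both choices verify the required inequalities and yield the same diameter, and yours makes the bookkeeping slightly shorter.
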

\begin{proof}
	We begin by defining $d$. Put
	\begin{equation}\label{definitionConcatenation}
	d(x,y):=\begin{cases}
	d_1(x,y), & x,y\in X_1\\
	d_2(x,y), & x,y\in X_2\\
	\frac{\max\{r_1,r_2\}}{1+K}, & \text{everywhere else.}
	\end{cases}
	\end{equation}
	From \eqref{definitionConcatenation} we may infer that $d_{|X_i\times X_i} = d_i$ for $i=1,2$, thus it is an extension of both $d_1$ and $d_2$. The definition is correct thanks to $X_1$ and $X_2$ being disjoint.
	
	It is easily seen that the pair $(X,d)$ satisfies the conditions of a semimetric space, as having at least three elements guarantees that $\max\{r_1,r_2\}>0$. Moreover, the fact, that the resulting space diameter equals the greater one of the diameters of composing spaces is a direct consequence of the definition of $d$.  
	What is left to check is whether the strong $b$-metric (or just $b$-metric) condition holds. Consider any three distinct points $x,y,z\in X$. We have the following cases:
	\begin{enumerate}
		\item $x,y,z\in X_1$ or $x,y,z\in X_2$. In such cases $d$ coincides with either $d_1$ or $d_2$, so the desired inequality holds.
		\item $x,y\in X_1$ and $z \in X_2$. We need to check three variants of a strong $b$-metric inequality.
		\begin{equation*}
		\begin{aligned}
			d(x,y) &=& d_1(x,y) \leqslant r_1 &\leqslant& \frac{\max\{r_1,r_2\}}{1+K} \cdot (1+K)  = Kd(x,z)+d(z,y),\\
			d(x,z) &=& \frac{\max\{r_1,r_2\}}{1+K} &\leqslant& K\cdot 0 + \frac{\max\{r_1,r_2\}}{1+K} \leqslant K d_1(x,y)+d(y,z),\\
			d(y,z) &=& \frac{\max\{r_1,r_2\}}{1+K} &\leqslant& K\cdot 0 + \frac{\max\{r_1,r_2\}}{1+K} \leqslant K d_1(y,x) + d(x,z).\\
		\end{aligned}
		\end{equation*}
		One can easily notice, that the $b$-metric inequality follows from this reasoning as well.
		\item $x\in X_1$ and $y,z\in X_2$. The desired inequalities in this case are obtained analogously as above. 
%		In this case, we have exactly the same reasoning as in previous one. 
%	\begin{equation*}
%		\begin{aligned}		
%		%\begin{flalign*}
%			d(x,y) &=& \frac{\max\{r_1,r_2\}}{1+K} &\leqslant K \cdot \frac{\max\{r_1,r_2\}}{1+K} +0  \leqslant Kd(x,z)+d_2(z,y),\\
%			d(x,z) &=& \frac{\max\{r_1,r_2\}}{1+K} &\leqslant K\cdot \frac{\max\{r_1,r_2\}}{1+K} + 0 \leqslant K d(x,y)+d_2(y,z),\\
%			d(y,z) &=& d_2(y,z) \leqslant r_2 &\leqslant K\cdot \frac{\max\{r_1,r_2\}}{1+K} + \frac{\max\{r_1,r_2\}}{1+K} = Kd(y,x) + d(x,z).\\
%		%\end{flalign*}
%		\end{aligned}
%		\end{equation*}
	\end{enumerate}
	Therefore, $(X,d)$ is a strong $b$-metric space (or $b$-metric space).
\end{proof}

\begin{remark}
	In the case where we are interested in concatenating $b$-metric spaces, we may replace $1+K$ with $2K$, which yields a shorter distance between these two parts of space.
\end{remark}

Now we know that in a case when we have a finite family of semimetric spaces satisfying a given property, their disjoint sum also does satisfy such condition. The problem appears when we want to concatenate an infinite family of metric spaces. As we will see in the following results, sometimes it is desired to construct an unbounded space, but the lemma above does not offer a help in such case either. Therefore, we need to introduce the following

\begin{lemma}[\textbf{On the union of an increasing sequence of semimetric spaces}]\label{summationLemma}

 	Let $\T$ be a linearly ordered set and $\F := \left\{(X_t,d_t) \; : \; t\in\T\right\}$ be an increasing family of strong $b$-metric ($b$-metric respectively) with a fixed constant $K\geqslant 1$ spaces i.e. for every pair of indices $s\leqslant t$ we have $X_s \subseteq X_t$ and $d_s \subseteq d_t$ (which means $d_s(x,y) = d_t(x,y)$ for $x,y\in X_s$). Then a pair $(X,d)$ defined as 
\[
X:=\bigcup_{t\in \T} X_t \qquad \qquad d(x,y):=d_p(x,y), \text{ where } p \text{ is any index such that } x,y\in X_p
\] 
is strong $b$-metric ($b$-metric respectively) with the relaxation constant $K\geqslant 1$.
\end{lemma}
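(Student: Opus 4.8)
The plan is to verify, step by step, that the pair $(X,d)$ from the statement is well defined and then inherits the strong $b$-metric (or $b$-metric) inequality from the members of the family $\F$. First I would check that $d$ is well defined: if $x,y\in X$, then by definition of $X$ there is some $t$ with $x\in X_t$ and some $s$ with $y\in X_s$; since $\T$ is linearly ordered we may assume $s\leqslant t$, so $X_s\subseteq X_t$ and hence $x,y\in X_t$, which shows at least one admissible index $p$ exists. For independence of the choice of $p$, take any two indices $p,q$ with $x,y\in X_p$ and $x,y\in X_q$; by linearity assume $p\leqslant q$, so $d_p\subseteq d_q$ gives $d_p(x,y)=d_q(x,y)$. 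Thus $d$ is a genuine function on $X\times X$, and it clearly restricts to $d_t$ on each $X_t\times X_t$.

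Next I would confirm that $(X,d)$ is a semimetric space. Property (S1): if $x=y$, pick any $p$ with $x\in X_p$, then $d(x,y)=d_p(x,x)=0$; conversely if $d(x,y)=0$, choose $p$ with $x,y\in X_p$, then $d_p(x,y)=0$ forces $x=y$ by (S1) for $d_p$. Property (S2) is immediate since each $d_p$ is symmetric. The only mild subtlety is that one must always remember to first produce a common index containing the relevant points before invoking a property of the corresponding $d_p$; this is routine given linearity of $\T$.

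The core of the argument is the triangle-type inequality. Take distinct $x,y,z\in X$. Choose indices with $x,y\in X_{t_1}$, $y,z\in X_{t_2}$, $x,z\in X_{t_3}$, and let $t:=\max\{t_1,t_2,t_3\}$ (this maximum exists because $\{t_1,t_2,t_3\}$ is a finite subset of the linearly ordered $\T$). Since the family is increasing, $X_{t_i}\subseteq X_t$ for each $i$, so $x,y,z\in X_t$, and on $X_t$ the function $d$ agrees with $d_t$. Because $(X_t,d_t)$ is a strong $b$-metric space with constant $K$, we get
\[
d(x,z)=d_t(x,z)\leqslant K\,d_t(x,y)+d_t(y,z)=K\,d(x,y)+d(y,z),
\]
which is exactly condition (S) for $(X,d)$. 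In the $b$-metric case the identical computation with $d_t(x,z)\leqslant K(d_t(x,y)+d_t(y,z))$ yields condition (B). Hence $(X,d)$ is strong $b$-metric (respectively $b$-metric) with the same constant $K$.

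I do not expect a serious obstacle here; the proof is essentially a direction-chasing argument exploiting that any finite configuration of points lives inside a single member of the family, together with the compatibility $d_s\subseteq d_t$. The one point deserving care—really the ``main obstacle'' in spirit—is making sure that every invocation of a property of some $d_p$ is preceded by passing to a common index that contains all points involved, and that the maximum of the finitely many relevant indices is taken so that a \emph{single} space $(X_t,d_t)$ handles the whole inequality at once; if one tried to use three different indices for the three distances, the argument would not close because the constant $K$ is only guaranteed within one space.
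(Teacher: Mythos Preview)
Your proof is correct and follows essentially the same approach as the paper: both show well-definedness from the compatibility $d_s\subseteq d_t$, verify (S1) and (S2) by reducing to a single $d_p$, and establish the (S) (or (B)) inequality by locating all three points in a common $X_{t_0}$ and invoking the inequality there. Your version is simply more explicit about why such a common index exists (taking the maximum of finitely many indices in the linearly ordered $\T$) and about the independence of $d(x,y)$ from the choice of $p$, points the paper leaves implicit.
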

\begin{proof}
Since $\F$ is increasing then $d(x,y)$ is well defined for all $x,y\in X$.
We can then proceed to checking axioms of semimetric
\begin{itemize}
\item[(S1)] $d(x,y)=0$ implies that $d_{t_0} (x,y)=0$ for some $t_0\in \T$. Therefore $x=y$. The reverse implication is obvious;
\item[(S2)] $d(x,y)=d_t(x,y)=d_t(y,x)=d(y,x)$ for some $t\in \T$.
\end{itemize} 
Lastly, we need to prove the (S) condition (or (B) condition) for $(X,d)$. Take any three distinct points $x,y,z\in X$. There exists $t_0\in \T$ such that $x,y,z\in X_{t_0}$. Since $(X_{t_0},d_{t_0})$ satisfies (S) (or (B), respectively), we have
\[
d(x,z) = d_{t_0} (x,z)\leqslant K d_{t_0} (x,y) + d_{t_0}(y,z) = Kd(x,y)+d(y,z).
\]
Consequently, $(X,d)$ satisfies (S) condition. The proof of $b$-metric case goes analogously.
\end{proof}

\begin{remark}
It is worth noting that we do not employ this lemma in its full extent, as in the subsequent reasoning we will restrict to the cases where $\T=\N$. 
\end{remark}

Notice that these two tools allow us to construct an increasing sequence of strong $b$-metric spaces having some desired property and then gluing them into a single, larger space -- perhaps of infinite diameter.

Although strong $b$-metric spaces constructed in such a way can hardly be seen as natural, this approach partially refers to \cite[Remark 12.4]{KirkShahzad} of Kirk and Shahzad on providing more examples of strong $b$-metric spaces.

\section{Characterizing certain property-preserving functions}

In this section we introduce the main result of this paper, foreshadowed in the abstract.
However, to proceed with introducing the new result, we need to extend \cite[Definition 3]{Tammatada2018}.
	
\begin{definition}
	Let $K\geqslant 1$ and $a,b,c \in [0,+\infty)$ be such that $a\geqslant b \geqslant c$. We will say that the $(a,b,c)$ forms a \textit{triangle triplet} (respectively $K$-triangle triplet or strong $K$-triangle triplet) if the respective set of inequalities: (TT), ($K$-TT) or (S$K$-TT), holds:
	    \begin{itemize}
	    \item[(TT)] $a\leqslant b+c$;
	    \item[($K$-TT)] $a\leqslant K\left( b+c\right)$;
	    \item[(S$K$-TT)] $a\leqslant K c+b$. 
	    \end{itemize}
\end{definition}

\begin{remark}\label{remaaaark}
	Usually, no assumptions on relations between $a,b$ and $c$ are made. Adding these relations enables us to shorten the latter part of the definitions as well as shorten some parts of the proofs. While trying not to confuse the reader, we will call a triplet $(a,b,c)$ a triangle triplet etc. if rearrangement of those three numbers form the respective variant of the triangle triplet. 
	
	For example we will refer to $(2,4,5)$ as the triangle triplet due to the fact, that $(5,4,2)$ satisfies the discussed definition.
\end{remark}

We shall now quote a well-known result from the theory of metric preserving functions, which we will try to replicate \cite{BorsikDobos,Corazza1999,DobosLectures}.

\begin{lemma}[\textbf{Triangle triplet characterization of $P_M$}]

Let $f:[0,+\infty)\to[0,+\infty)$ be an amenable function. Then $f$ is metric preserving iff for any triangle triplet $(a,b,c)$, values $\{f(a),f(b),f(c)\}$ can be arranged into a triangle triplet. 
\end{lemma}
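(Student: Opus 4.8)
The plan is to prove both implications by exploiting the elementary correspondence between triangle triplets and metrics on a three-point set, so that the triangle inequality for $d$ (or for $f\circ d$) becomes literally the statement that a certain triplet is a triangle triplet.

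For the \emph{sufficiency} direction I would assume that $f$ carries the entries of every triangle triplet into the entries of a triangle triplet, and fix an arbitrary metric space $(X,d)$. Since $f$ is amenable, $f(d(x,y))=0$ iff $d(x,y)=0$ iff $x=y$, so $f\circ d$ satisfies (S1); (S2) is immediate from symmetry of $d$, hence $f\circ d$ is a semimetric. To verify (M), fix $x,y,z\in X$: the three numbers $d(x,y),d(y,z),d(x,z)$ form a triangle triplet (in the sense of Remark~\ref{remaaaark}), since the largest of them is at most the sum of the other two --- this is exactly the triangle inequality for $d$. By hypothesis, $f(d(x,y)),f(d(y,z)),f(d(x,z))$ also form a triangle triplet, so the largest of these three nonnegative numbers is at most the sum of the other two; a one-line check (if $A\geqslant B\geqslant C\geqslant 0$ and $A\leqslant B+C$, then also $B\leqslant A\leqslant A+C$ and $C\leqslant A+B$) shows that in fact every pairwise triangle inequality holds, in particular $f(d(x,z))\leqslant f(d(x,y))+f(d(y,z))$. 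As $x,y,z$ were arbitrary, $f\circ d$ is a metric, i.e.\ $f\in P_M$.

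For the \emph{necessity} direction I would assume $f\in P_M$ and take a triangle triplet $(a,b,c)$ with $a\geqslant b\geqslant c$. If $c>0$, then $a,b,c$ are all positive, so I can put $X=\{x,y,z\}$ with three genuinely distinct points and define $d$ symmetric, zero on the diagonal, with $d(x,z)=a$, $d(x,y)=b$, $d(y,z)=c$. The condition $a\leqslant b+c$ is the triangle-triplet hypothesis, while $b\leqslant a\leqslant a+c$ and $c\leqslant a\leqslant a+b$ are trivial, so $(X,d)$ is a metric space. Since $f$ is metric preserving, $f\circ d$ is a metric, and its triangle inequality applied to the three vertices shows that $\{f(a),f(b),f(c)\}$ satisfies all pairwise triangle inequalities, hence can be arranged into a triangle triplet. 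If $c=0$, then $a\leqslant b+c=b$ forces $a=b$, so $\{f(a),f(b),f(c)\}=\{f(a),f(a),0\}$, which is a triangle triplet because $f(a)\leqslant f(a)+0$; this disposes of the degenerate case.

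I do not expect a real obstacle here: the proof is a bookkeeping argument. The only two points that need a little care are (i) verifying that the three-point construction in the necessity direction really is a metric space --- which is precisely what the triangle-triplet assumption provides --- and (ii) using the phrase ``can be arranged into a triangle triplet'' in the correct direction in the sufficiency part, namely that it yields \emph{all three} pairwise triangle inequalities among $f(a),f(b),f(c)$, not just one of them. Amenability is used only to secure the (S1) axiom for $f\circ d$, and it enters symmetrically in both implications.
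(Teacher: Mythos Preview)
Your proof is correct and follows the standard route: build a three-point metric space from a given triangle triplet to get necessity, and read off the triangle inequality pointwise for sufficiency. Both directions are handled cleanly, including the degenerate case $c=0$ (where amenability gives $f(0)=0$).

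Note, however, that the paper does \emph{not} supply its own proof of this lemma: it is explicitly quoted as a well-known result with references to Bors\'{i}k--Dobo\v{s}, Corazza, and Dobo\v{s}'s lectures, and is used only as a template to be ``replicated'' for the more general classes $P_{B}$, $P_{S}$, etc. So there is nothing in the paper to compare your argument against directly. Your approach is precisely the classical one from those references; the paper's own contribution (Lemma~\ref{mainlemma} and Theorem~\ref{extendedViaTriangles}) requires the more elaborate machinery of concatenation and summation lemmas only because in the $b$-metric and strong $b$-metric settings the relaxation constant is not fixed a priori, so a single three-point space no longer suffices and one must glue together infinitely many of them.
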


Let us focus for a moment on the particular subclass of functions preserving properties (B) and (S), where the relaxation constant on the original space is fixed. 

\begin{lemma}[\textbf{Triangle-like triplet characterization of property-preserving functions with fixed relaxation constant}]\label{mainlemma}

Let $f:[0,+\infty)\longrightarrow [0,+\infty)$ be a function such that for every (strong) $b$-metric space $(X,d)$ with fixed relaxation constant $K\geqslant 1$, the space $(X,f\circ d)$ is a (strong) $b$-metric space. Then, there exists $K^\prime \geqslant 1$ such that:
\begin{itemize}
\item[a)] for every (strong) $K$-triangle triplet $(a,b,c)$, the values $f(a)$, $f(b)$ and $f(c)$ can be arranged into a (strong) $K^\prime$-triangle triplet.
\item[b)] the relaxation constant of the resulting space $(X,f\circ d)$ is bounded by $K^\prime$.
\end{itemize}
\end{lemma}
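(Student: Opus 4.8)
The plan is to build, for a given triangle-like triplet $(a,b,c)$, a small $b$-metric (or strong $b$-metric) space $(X,d)$ with relaxation constant $K$ on which the values of $f$ on the triplet are forced to appear as mutual distances; then applying the hypothesis that $(X,f\circ d)$ satisfies the same type of inequality will directly yield the desired $K'$-triplet condition, \emph{provided} we can produce a single constant $K'$ that works simultaneously for all triplets. The natural candidate for $K'$ is obtained by a boundedness argument on ratios, so I would organize the argument so that part~b) is essentially a restatement of part~a) together with the observation that the resulting constant does not depend on the particular space.

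First I would handle the concrete three-point construction. Given a (strong) $K$-triangle triplet $(a,b,c)$ with $a\ge b\ge c$, put $X=\{1,2,3\}$ and try $d(1,2)=c$, $d(2,3)=b$, $d(1,3)=a$ (filling in the remaining values via (S1), (S2)); one checks using $a\le K(b+c)$ (resp.\ $a\le Kc+b$) together with $a\ge b\ge c$ that this $d$ is a $b$-metric (resp.\ strong $b$-metric) with relaxation constant $K$ — here the hypothesis that $X_1\cup X_2$, i.e.\ the ambient set, has at least three points is automatic, and the degenerate cases where some of $a,b,c$ coincide or vanish need a separate but trivial check. Since $f\circ d$ is then a (strong) $b$-metric space, the triplet $\bigl(f(a),f(b),f(c)\bigr)$ can be rearranged into a (strong) $K_{X}$-triangle triplet, where $K_X$ is the relaxation constant of $(X,f\circ d)$. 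The subtlety is that a priori $K_X$ could depend on the triplet, so this step alone only gives a \emph{pointwise} bound.

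The main obstacle — and the heart of the lemma — is therefore to upgrade this pointwise statement to a \emph{uniform} constant $K'$. The key idea is to argue by contradiction: if no finite $K'$ works, then there is a sequence of (strong) $K$-triangle triplets $(a_n,b_n,c_n)$ for which every rearrangement of $\bigl(f(a_n),f(b_n),f(c_n)\bigr)$ requires relaxation constant at least $n$. Using Lemma~\ref{summationLemma} (for $\T=\N$) I would assemble the corresponding three-point spaces — first rescaled or relabelled so they are pairwise disjoint and, via Lemma~\ref{concatenlemma}, concatenated into a single (strong) $b$-metric space with relaxation constant $K$ — into one countable space $(X,d)$ with relaxation constant $K$. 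By hypothesis $(X,f\circ d)$ is a (strong) $b$-metric space, so it has \emph{some} finite relaxation constant $K'$; but this $K'$ simultaneously controls every embedded triplet, contradicting the choice of the sequence. This $K'$ is then the constant asserted in the lemma: part~a) follows from the three-point construction above with $K_X$ replaced by $K'$, and part~b) follows because the same $K'$ bounds the relaxation constant of $(X,f\circ d)$ for \emph{every} input space $(X,d)$ with constant $K$ (any violation on some space would, restricted to three points, violate part~a)). The one technical point to be careful about in the concatenation step is that gluing must not destroy the relaxation constant nor the separation of the triplets' distance values, which is exactly what Lemma~\ref{concatenlemma} and Lemma~\ref{summationLemma} guarantee when the pieces have finite diameter — and each three-point piece trivially does.
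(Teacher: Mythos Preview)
Your proposal is correct and follows essentially the same route as the paper: argue by contradiction, realize each bad (strong) $K$-triangle triplet as a three-point space with relaxation constant $K$, then glue the countable family via Lemma~\ref{concatenlemma} and Lemma~\ref{summationLemma} into a single space on which the hypothesis forces a uniform $K'$. The paper omits your preliminary pointwise step and goes directly to the contradiction, but the substance of the argument is identical; your derivation of b) from a) via restriction to three-point subsets is also what the paper has in mind (cf.\ the remark following the lemma).
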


\begin{remark}
This lemma actually gives us insight into six families of property-preserving functions, namely $P_{B}$, $P_{B,S}$, $P_{S,B}$, $P_{S}$, $P_{M,B}$ and $P_{M,S}$ (the last two obtained by fixing $K=1$). Also, one can also immediately deduce b) from a).
\end{remark}

\begin{proof}
Due to the fact, that each version of this lemma is proved by the same method, we will simply prove the version for particular subclass of $P_{B,S}$.

Let $f$ be a function satisfying the assumptions of our lemma, i.e. for every $b$-metric space $(X,d)$ with relaxation constant $K\geqslant 1$ we have that $(X,f\circ d)$ is a strong $b$-metric space.

Suppose the contrary, i.e., for every $K^\prime \geqslant 1$ (in particular for every $n\in\mathbb{N}$) there exists a $K$-triangle triplet $\{a_n,b_n,c_n\}$ which is mapped to $\left\{f(a_n),f(b_n),f(c_n)\right\}$, the latter one not forming a strong $n$-triangle triplet. Without the loss of generality, we may assume that $f(a_n)>n\cdot f(b_n)+f(c_n)$ (this fact becomes clear in the further part of the proof).
	
	Let $\hat{X}_1 = X_1:=\{(1,1),(1,2),(1,3)\}$ and \[
	d_1((1,1),(1,2))=a_1 \qquad d_1((1,2),(1,3))=b_1,  \qquad d_1((1,1),(1,3))=c_1. \] The rest of the values, again, stems from extending $d_1$ to the whole $X_1\times X_1$ as a semimetric (i.e., (S1) and (S2) conditions imply the remaining function values), yielding $\hat{d}_1$. Thus $(\hat{X}_1,\hat{d}_1)$ is not a strong $b$-metric space with relaxation constant $1$. Nonetheless, it is still a strong $b$-metric space if we replace $1$ by some larger constant.
	
    Now, we will proceed inductively. Having defined $\left(\hat{X}_{n-1},\hat{d}_{n-1}\right)$, which is a finite (thus bounded) $b$-metric space with relaxation constant $K$, we try to define $\hat{X}_{n}$. Put
	\(
    X_n:=\{(n,1),(n,2),(n,3)\} \) and  \(
    d_n:X_n\times X_n \to [0,+\infty),
    \)
    as a unique semimetric satisfying 
    \[
    \hat{d_n}((n,1),(n,2)) = a_n, \qquad
    \hat{d_n}((n,2),(n,3)) = b_n, \qquad
    \hat{d_n}((n,1),(n,3)) = c_n.
    \]
    Again, $(X_n,d_n)$ is a bounded $b$-metric space with relaxation constant $K$. We can now use Lemma \ref{concatenlemma} to obtain larger $b$-metric space $\hat{X}_n:=X_{n}\cup \hat{X}_{n-1}$, equipped with the extended $b$-metric $\hat{d}_{n}$, which equals $d_n$ on $X_n\times X_n$ and $\hat{d}_{n-1}$ on $\hat{X}_{n-1}\times\hat{X}_{n-1}$. 
    Successively, we obtain the increasing family of $b$-metric spaces $\left\{\left(\hat{X}_n,\hat{d}_n\right) \ : \ n\in\mathbb{N} \right\}$. Due to Lemma \ref{summationLemma} we obtain a $b$-metric space $(X,D)$ defined as
    \[
    X:=\bigcup_{n\in\mathbb{N}} X_n, \qquad D(x,y):=\hat{d}_{\min\{p \; : \; x,y\in\hat{X}_p\}}(x,y).
    \]
    
    What is left to prove is that $(X,f\circ D)$ fails to be a strong $b$-metric space.
    
    Suppose that (S) inequality holds in $(X,D)$ for some $K^\prime \geqslant 1$ and let $n_0\geqslant K^\prime$ be a fixed natural number. This way, the fact that $(X,D)$ satisfies the strong $b$-metric inequality with constant $K^\prime$ implies satisfying it for $n_0$. Considering $(n_0,1),(n_0,2),(n_0,3)\in X$ yields
    \begin{eqnarray*}
    D\left((n_0,1),(n_0,2)\right) &=& d_{n_0}\left( (n_0,1),(n_0,2)\right) = a_{n_0}\\ &>& n_0 \cdot b_{n_0}+c_{n_0} = n_0 \cdot D\left((n_0,2),(n_0,3)\right) +  D\left((n_0,3),(n_0,1)\right)\\
    &=& n_0 \cdot d_{n_0}\left( (n_0,2),(n_0,3)\right) + d_{n_0}\left( (n_0,3),(n_0,1)\right).    
    \end{eqnarray*}
	Of course $f(a_n)$, $f(b_n)$ and $f(c_n)$ do not have to be aligned in the same order as $a_n$, $b_n$ and $c_n$, but the result stays the same as we can interchange the appropriate pairs of points in the reasoning above to obtain the proper counterexample.

    This shows that $(X,f\circ D)$ fails to satisfy strong $b$-metric condition for $n_0$, thus it fails for $K^\prime$ as well. Since $K^\prime$ was arbitrary, $(X,f\circ D)$ is not a strong $b$-metric space despite $(X,D)$ being a $b$-metric one. This concludes the proof of both part a) and b), since it yields a contradiction with the assumption that $(X,f\circ d)$ is a strong $b$-metric space whenever $(X,d)$ is a $b$-metric space with relaxation constant $K$. 
    
    The reasoning for the remaining variants of this lemma is exactly the same -- all that needs to be changed is the type of triangle triplets used in constructing the proper counterexample.
\end{proof}

The said lemma allows us to draw somewhat surprising conclusion, that a single property-preserving mapping $f$ does not allow us to obtain arbitrarily large values of relaxation constant on resulting space whenever the relaxation constant of the initial space is 
bounded. This seems particularly interesting for classes $P_{MS}$ and $P_{MB}$.

We shall now proceed with first of the main results of this paper, which is an extension of analogous characterization already proven by other authors (see \cite{BorsikDobos,Corazza1999,Tammatada2018}). Their results have been incorporated in this theorem in points (i) and (ii) -- in particular, we will not provide the proofs for those, as they were well-described in the respective papers they were taken from.

\newpage

\begin{theorem}[\textbf{Characterization of triangle-type inequality preserving functions via triangle-like triplets:}] \label{extendedViaTriangles}
	Let $f:[0,+\infty)\to [0,+\infty)$ be an amenable function. Then:
	\begin{enumerate}
		\item $f\in P_{MB}=P_{B}$ $\iff$ there exists $K\geqslant 1$ for every triangle triplet $(a,b,c)$, the values $\left(f(a),f(b),f(c)\right)$ form a $K$-triangle triplet.
		\item $f\in P_{M}$ $\iff$ for any triangle triplet $(a,b,c)$, the resulting values $\left(f(a),f(b), f(c)\right)$ form a triangle triplet as well.
		\item $f\in P_{BS}$ $\iff$ for every $K_1\geqslant 1$ there exists $K_2\geqslant 1$, such that for every $K_1$-triangle triplet $(a,b,c)$, the values $\left(f(a),f(b), f(c)\right)$ form a $K_2$-strong triangle triplet.
		\item $f\in P_{S}$ $\iff$ for every $K_1\geqslant 1$ there exists $K_2\geqslant 1$, such that for every $K_1$-strong triangle triplet $(a,b,c)$, the values $\left(f(a),f(b), f(c)\right)$ form a $K_2$-strong triangle triplet.
		\item $f\in P_{SB}$ $\iff$ for every $K_1\geqslant 1$ there exists $K_2\geqslant 1$, such that for every $K_1$-strong triangle triplet $(a,b,c)$, the values $\left(f(a),f(b), f(c)\right)$ form a $K_2$-triangle triplet.
		\item $f\in P_{MS}$ $\iff$ there exists $K\geqslant 1$, such that for every triangle triplet $(a,b,c)$, the values $\left(f(a),f(b), f(c)\right)$ form a $K$-strong triangle triplet.
		\item $f\in P_{SM}$ $\iff$ for every $K$-strong triangle triplet $(a,b,c)$, the values $\left(f(a),f(b), f(c)\right)$ form a triangle triplet.
	\end{enumerate}
\end{theorem}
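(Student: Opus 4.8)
The plan is to prove the seven equivalences by the same two-directional template, leaning on Lemma~\ref{mainlemma} for the ``only if'' (forward) direction and on an explicit space-construction argument for the ``if'' (backward) direction; items (i) and (ii) are quoted from the literature, so the real work is (iii)--(vii). For the \textbf{forward direction}, suppose $f$ belongs to one of these classes. In the cases $P_{BS}$, $P_S$, $P_{SB}$, $P_{MS}$ we simply invoke Lemma~\ref{mainlemma}: for each fixed relaxation constant $K_1$ on the source space, the hypothesis says $f\circ d$ satisfies the target condition, so part a) of that lemma yields a constant $K_2=K'$ such that every (strong) $K_1$-triangle triplet is sent to a (strong) $K_2$-triangle triplet; quantifying over $K_1$ (or fixing $K_1=1$ for $P_{MS}$) gives exactly the stated triplet condition. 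The only genuinely new forward argument is (vii), $P_{SM}$: here we must show that if $f\circ d$ is an honest metric for \emph{every} strong $b$-metric $d$ (with \emph{any} $K$), then each $K$-strong triangle triplet maps to a triangle triplet. This follows by the same counterexample idea as in Lemma~\ref{mainlemma}: given a strong $K$-triangle triplet $(a,b,c)$ whose image is not a triangle triplet, place $a,b,c$ as the three pairwise distances on a $3$-point set, which is a strong $b$-metric space (with some relaxation constant depending on the triplet), and observe directly that $(f(a),f(b),f(c))$ violates the triangle inequality on that very space — contradicting $f\in P_{SM}$.

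For the \textbf{backward direction} I would argue uniformly as follows. Assume the stated triplet condition holds with the appropriate constant(s), take an arbitrary source space $(X,d)$ of the required type, and check that $(X,f\circ d)$ is a semimetric and satisfies the target inequality. Amenability of $f$ gives (S1) for $f\circ d$, and symmetry is immediate, so (S2) holds; the content is the target inequality. For the three-point inequalities (B), (S), (M) this is local: pick any three points $x,y,z\in X$, note that the three numbers $d(x,y),d(y,z),d(x,z)$ form the relevant kind of triangle triplet because $d$ satisfies its source inequality, apply the hypothesis to get that $(f\circ d(x,y), f\circ d(y,z), f\circ d(x,z))$ forms the target triplet, and read off the desired inequality (being careful, as the authors note in Remark~\ref{remaaaark}, that the three values may need reordering — but the triplet notion is rearrangement-invariant, so this is harmless). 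The one place where a little more care is needed is whether the $K_2$ produced may depend on the particular triple of points: in cases (iii)--(v) the hypothesis already fixes $K_2$ once $K_1$ is fixed, and $K_1$ is the global relaxation constant of $(X,d)$, so $K_2$ is uniform over $X$; in (vi) and (vii) the constant is outright global. Hence $f\circ d$ satisfies the target three-point inequality with a constant independent of the chosen triple, as required.

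The item that needs a separate, slightly heavier treatment is (i) on the $P_{MB}=P_B$ side insofar as one wants a \emph{single} $K$ working for all metric spaces, and — more to the point for new content — the backward directions do not directly cover the polygonal-type bookkeeping; but since (i), (ii) are imported and (iii)--(vii) involve only three-point conditions (B), (S), (M) on the target side, no polygonal inequality (P) enters this theorem, and the three-point localization above suffices. I would write the proof as: ``Items (i) and (ii) are \cite{BorsikDobos,Corazza1999,Tammatada2018}. For (iii)--(vii), the forward implications follow from Lemma~\ref{mainlemma} (with $K_1=1$ in (vi)), except for (vii) which follows from the three-point counterexample construction. The backward implications all follow by the localization argument: given a source space of the relevant type, each triple of points yields a source triangle triplet, the hypothesis converts it to a target triplet, and the target inequality is read off; the relaxation constant obtained is uniform because the hypothesis supplies it uniformly.''

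The step I expect to be the \textbf{main obstacle} is making the backward direction of (iii) and (v) fully rigorous: there the source space is a \emph{strong} $b$-metric space with a fixed $K_1$, and one must confirm that for every triple $x,y,z$ the ordered distances indeed satisfy the defining strong $K_1$-triangle triplet inequality $a\le K_1 c + b$ in the correct ordering $a\ge b\ge c$ — the strong $b$-metric axiom gives $d(x,z)\le K_1 d(x,y)+d(y,z)$ for \emph{all} orderings of the three points, and one has to check that at least one of these three cyclic inequalities is the one with the largest distance on the left and the smallest on the right, so that the triplet genuinely qualifies as a strong $K_1$-triangle triplet eligible for the hypothesis. This is a short but necessary verification, essentially the observation that applying the strong $b$-metric inequality with the largest of the three distances on the left-hand side and choosing which of the remaining two to scale by $K_1$ always produces a valid (S$K_1$-TT) after the canonical reordering.
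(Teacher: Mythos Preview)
Your approach matches the paper's proof almost exactly: both cite (i)--(ii), invoke Lemma~\ref{mainlemma} for the forward direction of (iii)--(vi), handle (vii) forward via a three-point counterexample, and prove sufficiency by localizing to three points and reading off the target inequality from the triplet hypothesis with the constant supplied uniformly. One small slip: your ``main obstacle'' about verifying that three distances in a strong $b$-metric space form a strong $K_1$-triangle triplet concerns (iv), (v), (vii) (where the \emph{source} is strong $b$-metric), not (iii) (whose source is $b$-metric); the verification itself is immediate, since applying axiom (S) with the roles of the two smaller distances swapped yields both $a\leqslant K_1 b+c$ and $a\leqslant K_1 c+b$.
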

\begin{proof}
	Once again, we would like to remind that $f$ does not have to be monotone, thus the relation $f(a)\geqslant f(b) \geqslant f(c)$ does not have to hold (see Remark \ref{remaaaark}).

	The necessity in each case follows as a simple conclusion from Lemma \ref{mainlemma}. In the last, seventh equivalence another short proof of necessity can be given, as considering a three-point space $X:=\{1,2,3\}$ is enough. 
	
	Supposing that for some $K$ a strong $K$-triangle triplet $\{a,b,c\}$ which is not mapped to a triangle one exists yields an instant contradiction if we define
  \[
  d(1,2):=a, \ d(2,3):=b, \ d(3,1):=c
  \]
  and allow the assumptions (S1), (S2) to fill in the rest of values. Applying $f$ to such  defined strong $b$-metric space maps it into a semimetric structure which is not a metric space. The sufficient part of this last implication follows from the same method as described in point f. above. 
    
 	A sufficiency part can be shown as follows.
 	
 	Assume that $f$ satisfies the right-hand side of equivalence (3) and let $(X,d)$ be an  arbitrary $b$-metric space. Let us denote the relaxation constant of this space by $K$. Consider three distinct points $x_1,x_2,x_3\in X$. WLOG we can assume that
 	\[
 	f\circ d \left( x_1,x_3\right) \geqslant f\circ d \left( x_1,x_2\right) \geqslant f\circ d \left( x_2,x_3\right).
 	\]
 	Since $(X,d)$ is a $b$-metric space, then values $d \left( x_1,x_3\right), d \left( x_1,x_2\right), d \left( x_2,x_3\right)$ can be arranged into a $K$-triangle triplet. From our assumption there exists $K^\prime$ such that $\{f\circ d \left( x_1,x_3\right),f\circ d \left( x_1,x_2\right),f\circ d \left( x_2,x_3\right) \}$ is a strong $K$-triangle triplet. From the definition of such triplet we obtain
 	\[
 	f\circ d \left( x_1,x_3\right) \leqslant K^\prime \cdot f\circ d \left( x_2,x_3\right)+ f\circ d \left( x_1,x_2\right).
 	\]
   Due to $K^\prime$ being independent of the choice of $x_1,x_2,x_3$ (since it depends solely on $K$) the proof of equivalence (3) is finished.
   
The sufficiency proofs for points (4-7) are almost identical.   
\end{proof}

%\textbf{Open problem 1:} Find (or prove the impossibility to do so) a function which does not belong to $P_{B,S}$ (or any other class connected with relaxed forms of triangle inequality), but satisfies the assumptions of Lemma \ref{mainlemma}.

This characterization might allow us to construct other results describing the properties of metric-type property preserving functions. Before moving to the next part of this paper, we point out two issues we are concerned about.

First one of them is the observation that for a fixed function $f$, the right-hand side of each of the implications (3), (4) and (5) in fact define a mapping $g:[1,+\infty)\to [1,+\infty)$, which for every $K_1\geqslant 1$ assigns the smallest possible $K_2\geqslant 1$ such that $(X,f\circ d)$ is $K_2$-parametrized space, whenever $(X,d)$ is the space of appropriate type with relaxation constant $K_1$. We know that in all three cases $g(1) = 1$ and the function $g$ is obviously nondecreasing. It is reasonable to think, that different functions $f_1$, $f_2$ are described by the same mapping $g$, but an unanswered question remains -- what properties of $f$ can we infer based on $g$ and conversely? Moreover, we would like to know what kinds of functions $g$ can be obtained in this way i.e., what attributes do we require from such functions to be obtainable from the discussed theorem.

Theorem \ref{extendedViaTriangles} also shifts the research on this functions from metric and topological setting to more real-function theoretical field, connected with functional inequalities. We already know several of the inclusions between the listed above $P_{A_1, A_2}$ families. Hopefully, this characterization will allow us to explore this topic further, allowing us to prove equalities or proper inclusions for some families of property-preserving functions. Thus, we pose the following

\textbf{Open problem 1:} Establish the relations between $P_{S,M}$, $P_{S}$, $P_{S,B}$, $P_{B,S}$ and other classes of property preserving functions. In particular, determine which of the inclusions are proper.

\section{Further characterizations for semimetrics satisfying relaxed polygonal inequalities}

It turns out, that the reasoning that was applied to obtain the Theorem \ref{extendedViaTriangles} can be extended on functions satisfying (P) axiom. However, we need to introduce additional definitions.

\begin{definition}\label{defpolygon}
A finite tuple of non-negative real numbers $A:=(a_1,\dots,a_n)$ such that 
\[
a_1 \geqslant a_2 \geqslant \dots \geqslant a_n.
\]
is said to be a $K$\textit{-relaxed polygon} if the following inequality holds:
\[
a_1 \leqslant K \cdot \sum_{i=2}^{n} a_i .
\]
\end{definition}

\begin{remark}\label{remmar}
Analogously to the Remark \ref{remaaaark}, we will refer to a tuple $(a_1,\dots,a_n)$ as $K$\textit{-relaxed polygon} whenever its elements, sorted in nonincreasing order, satisfy Definition \ref{defpolygon}.
\end{remark}

\begin{example}
For example, a tuple $A:=(120, 20, 10, 10, 10, 10)$ is an example of $K$-relaxed polygon with $K=2$. Indeed
\[
120 \leqslant 2\cdot \left( 20 +10 +10 +10 +10\right) = 120.
\]
Notice, that we do not require the largest number to be bounded by any subset sum of the remaining values, i.e. for example we do not require $120$ to be bound from above by $K\cdot (10+10)$.
\end{example}

One can observe, that such a tuple $A$ is a $K$-relaxed polygon if and only if
\[
\left(1+K\right)\cdot a_1 \; \leqslant \; K\cdot \sum_{i= 2}^{n} a_i.
\]

Then, we introduce the following notion of implementation.
\begin{definition}
Consider a $K$-relaxed polygon \[
A:=(a_1,\dots,a_n), \qquad a_1 \geqslant \dots \geqslant a_n
\] and a semimetric space $(X,d)$ satisfying $K$-rpi. If there exists a sequence of $n$-points, $x_1,\dots,x_n\in X$ such that 
\begin{itemize}
\item[(I1)] $d(x_1,x_n):=a_1$;
\item[(I2)] for each $2\leqslant i \leqslant n$, $d(x_{i-1},x_{i}):=a_{i}$;
%\item[(I3)] $d(x_1,x_n)\leqslant K \cdot \sum_{i=2}^n d(x_{i-1},x_{i})$, 
\end{itemize}
then we say that $(X,d)$ \textit{implements} a $K$-relaxed polygon $A$.
\end{definition}

One may wonder if for any $K$-relaxed polygon there exists a semimetric space $(X,d)$ which implements it and the answer to that question is, luckily, positive.

\begin{lemma}[\textbf{Implementation lemma:}]\label{implement}

Let $A:=(a_1,\dots,a_n)$ be a $K$-relaxed polygon. There exists an $n+1$ element set $X$ with a semimetric $d$ which satisfies $K$-rpi and implements $A$.
\end{lemma}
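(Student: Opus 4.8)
The plan is to represent $A$ by a weighted cycle, take a shortest--path (graph) metric, and then enlarge a single distance to force (I1). Work first on the point set $\{x_1,\dots,x_n\}$ and write $S:=\sum_{i=2}^n a_i$. Form the weighted graph $G$ whose edges are $x_1x_2,x_2x_3,\dots,x_{n-1}x_n$ with weights $a_2,\dots,a_n$ together with one further (chord) edge $x_1x_n$ of weight $a_1$; thus $G$ is a cycle of total weight $a_1+S$. Let $d_G$ be its shortest--path metric. Since $a_1=\max_i a_i$ and also $a_i\le S$ for $i\ge 2$, for every $2\le i\le n$ the single edge $x_{i-1}x_i$ (of length $a_i$) is no longer than the complementary arc (of length $a_1+S-a_i\ge a_i$), so $d_G(x_{i-1},x_i)=a_i$; likewise $d_G(x_1,x_n)=\min\{a_1,S\}$. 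Now define $d$ to coincide with $d_G$ on every pair except $\{x_1,x_n\}$, where we set $d(x_1,x_n):=a_1$. Then (I1) and (I2) hold by construction, and when $a_1\le S$ no change was made at all.

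It remains to check $K$-rpi for $d$. Two pointwise comparisons do the job: $d\ge d_G$ everywhere (with equality off $\{x_1,x_n\}$), and $d\le K\,d_G$ everywhere — the only non-obvious pair being $\{x_1,x_n\}$, where $d(x_1,x_n)=a_1$ while $d_G(x_1,x_n)=\min\{a_1,S\}\ge a_1/K$ by the defining inequality $a_1\le KS$ of a $K$-relaxed polygon. Given any chain $y_0,y_1,\dots,y_m$ in $X$, the triangle inequality for the honest metric $d_G$, applied repeatedly, gives $d_G(y_0,y_m)\le\sum_{l=1}^m d_G(y_{l-1},y_l)$, whence
\[
d(y_0,y_m)\ \le\ K\,d_G(y_0,y_m)\ \le\ K\sum_{l=1}^m d_G(y_{l-1},y_l)\ \le\ K\sum_{l=1}^m d(y_{l-1},y_l),
\]
which is exactly the $K$-rpi. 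Symmetry of $d$ and the condition $d(x,y)=0\iff x=y$ are clear once the entries are positive; if some $a_i=0$ one first identifies the corresponding consecutive points and deletes those entries, obtaining a shorter tuple that is still a $K$-relaxed polygon, and the degenerate cases ($n\le 2$, or $a_1=0$) are immediate. Finally, to land on exactly $n+1$ points one adjoins a single extra point $z$ with $d(z,x_i):=\diam_d(\{x_1,\dots,x_n\})$ for all $i$; since this common value dominates every distance already present, adding $z$ preserves $K$-rpi (indeed even $1$-rpi on the new pairs).

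The crux is the $K$-rpi verification after the enlargement $d(x_1,x_n):=a_1$. When $a_1>S$ this value strictly exceeds the graph distance, so $d$ is genuinely not a shortest--path metric and the plain ``concatenate triangle inequalities'' argument fails for $d$ itself; the resolution is precisely the sandwich $d_G\le d\le K\,d_G$ — enlarging one distance can only help the right-hand side of every rpi instance (this is where $d\ge d_G$ is used) while costing at most a factor $K$ on the left-hand side for the one pair it touches, and the relaxed-polygon hypothesis $a_1\le KS$ is exactly what makes that loss affordable. Everything else — that the prescribed values really survive as distances, the symmetry and non-degeneracy bookkeeping, and the padding to $n+1$ elements — is routine.
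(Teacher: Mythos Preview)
Your proposal is correct, and in fact the underlying space you build is \emph{exactly} the same as the paper's: a cycle on $n$ vertices with the prescribed edge weights, diagonals given by the shorter of the two arcs, and the single pair $\{x_1,x_n\}$ forced to equal $a_1$ rather than the shortest-path value $\min\{a_1,S\}$. Where you differ is in the verification of $K$-rpi. The paper checks the inequality by a direct case analysis on whether the points are adjacent, which ``path around the perimeter'' realises each diagonal, etc.---and it explicitly concedes that the multi-point case ``might need some more formal touch.'' You instead observe the sandwich $d_G\le d\le K\,d_G$ (the only nontrivial pair being $\{x_1,x_n\}$, where the $K$-relaxed-polygon hypothesis $a_1\le KS$ is exactly what is needed) and then chain the honest triangle inequality for the graph metric $d_G$. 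This is cleaner and fully rigorous; it is precisely the easy direction of Fagin's characterisation (Theorem~\ref{faagiin} in the paper), proved in-line so there is no circularity. The paper's approach has the minor advantage of being entirely self-contained and elementary, but yours makes the role of the hypothesis transparent.

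Two small remarks. First, the ``$n+1$'' in the statement is already inconsistent with the paper's own proof, which builds an $n$-point space; your padding by a point at distance $\diam$ is a correct fix and your argument that this preserves $K$-rpi is fine. Second, your treatment of zeros and of $n\le 2$ is a bit brisk: when $n=2$ and $a_1>a_2$ the conditions (I1)--(I2) force $d(x_1,x_2)=a_1=a_2$, so no implementation exists---this is a defect of the lemma's statement rather than of your argument, and in the paper's application only $n\ge 3$ matters.
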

\begin{proof}
We will construct such space in two steps. Let $X:=\{1,\dots, n\}$ and put 
\begin{itemize}
\item $d(1,n):=a_1$;
\item for each $1\leqslant i \leqslant n$, $d({i-1},{i}):=a_{i}$;
\end{itemize}
Thus, all of the requirements (I1)-(I2) are satisfied. This definition leaves us with $n$-gon with no diagonal distances defined. Consider now $i,j\in X$ such that they are not adjacent i.e. $|i-j|\notin\{1,n\}$. 

For such pair (for the sake of simplicity let us assume $i<j$) one can define the distance as follows
\begin{equation}\label{functiondef}
d(i,j):=\min\left\{ \sum_{k=i}^{j-1} d(k,k+1), \left(\sum_{k=j}^{n-1} d(k,k+1) +  d(1,n) + \sum_{k=1}^{i-1}d(k,k+1) \right) \right\},
\end{equation}
where the sums in the latter part of the minimum might be empty (for example when $i=1$ and $j<n$ or $j=n$ and $i>1$). 

\begin{figure}[H]
\includegraphics[width=\textwidth]{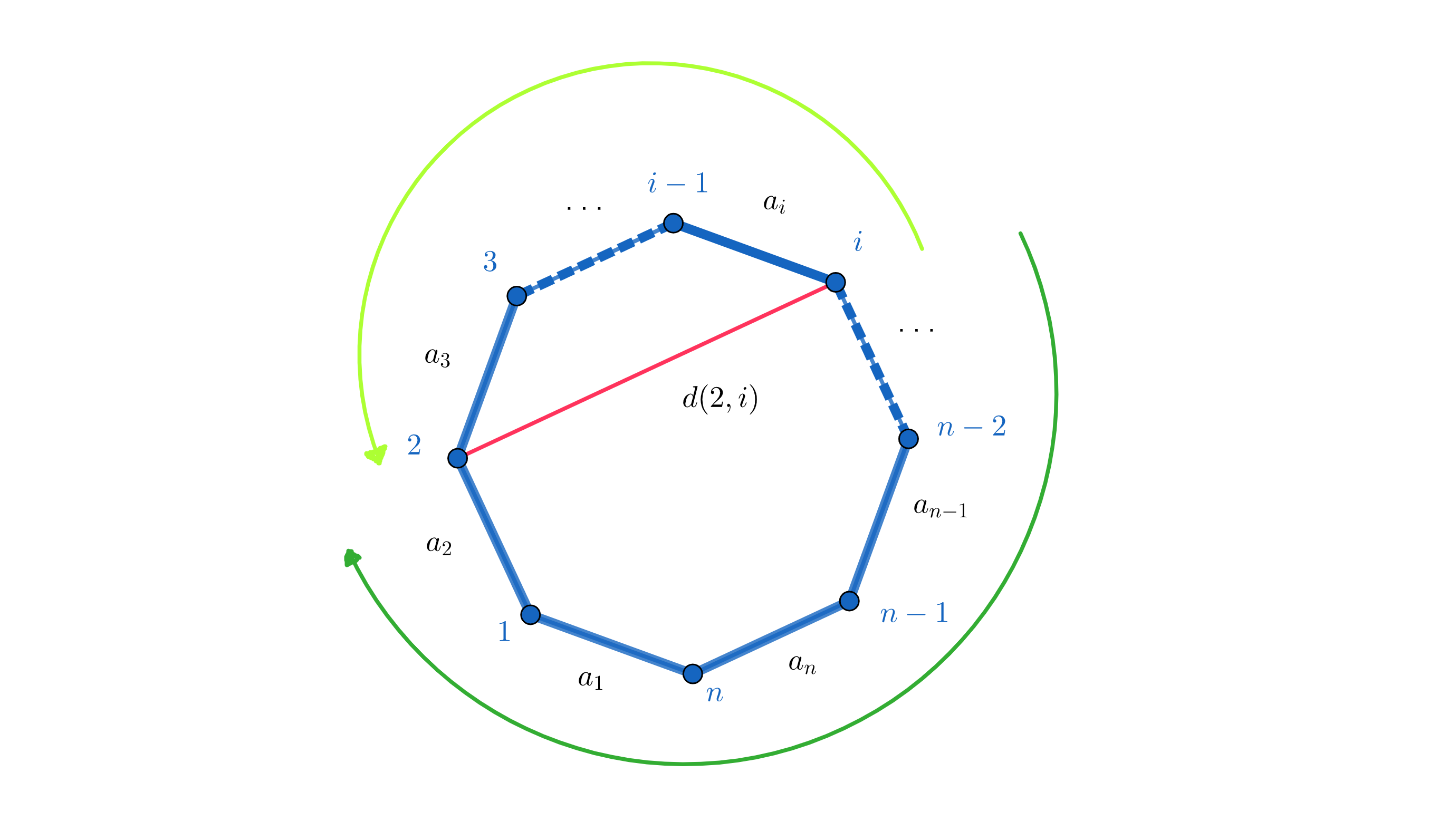}
\caption{Informally, the distance between two points (for example, red line between $2$ and $i$) can be described as the shorter of the paths over the perimeter. Notice, that such distance does not always have to be the left-oriented path (here denoted by lime green).} 
\end{figure}

Notice, that for any two non-adjacent points $i,j$ thanks to the formulation of $d(i,j)$ the distance can be bound from above by the distance over the perimeter of the discussed polygon. Indeed, for any sequence of point $i,x_1,x_2,\dots,x_k,j\in X$.
{\small
\begin{eqnarray*}
K\cdot \left( d(i,x_1) + \dots + d(x_k,j)\right) &\geqslant & K\cdot \min\left\{ \sum_{k=i}^{j-1} d(k,k+1), \left( d(1,n) + \sum_{k=j}^{n-1} d(k,k+1) + \sum_{k=1}^{i-1}d(k,k+1) \right) \right\}\\ &\geqslant &  d(i,j).
\end{eqnarray*}
}

Perhaps this part of the proof can be reworked to be nicer. 
Now, consider any two adjacent points of the space, $i$ and $i+1$ (the case of $n$ and $1$ is analogous). 
Consider any point $x\in X$. WLOG we can assume that $x>i$. 
Then we have three posibilities:
\begin{enumerate}
\item $d(i,x) = \sum_{k=i}^{x-1} d(k,k+1)$, then obviously 
$d(i,x)\geqslant d(i,i+1)$;
\item $d(i,x) = d(1,n) + \sum_{k=x}^{n-1} d(k,k+1) + \sum_{k=1}^{i-1}d(k,k+1)$.\\
If $d(x,i+1) =d(1,n) + \sum_{k=x}^{n-1} d(k,k+1) + \sum_{k=1}^{i}d(k,k+1)$ then $d(x,i+1)\geqslant d(i,i+1)$ and the case is once again trivial.
\item $d(i,x) = d(1,n) + \sum_{k=x}^{n-1} d(k,k+1) + \sum_{k=1}^{i-1}d(k,k+1)$ and $d(x,i+1) = \sum_{k=i+1}^{x-1} d(k,k+1)$.

Then, using the fact that $A$ is a $K$-relaxed polygon
\[
d(i,i+1) \leqslant K\cdot \left( d(1,n) - d(i,i+1) + \sum_{i=1}^n d(k,k+1)\right),
\]
which translates to 
\[
d(i,i+1) \cdot(1+K) \leqslant a_1 \leqslant K\cdot \sum_{k=1}^n d(k,k+1) = K\cdot \sum_{k=1}^n a_k.
\]
\end{enumerate}

When more points are considered, the reasoning remains the same (although it might need some more formal touch) -- we either use the fact that length of one of the edges exceeds the distance $d(i,i+1)$ or the sum of the distances is greater or equal to the remaining part of the perimeter of the initial $K$-relaxed polygon. In any case, the property (P) from Definition 2. holds.
\end{proof}

To obtain analogous result to Theorem \ref{extendedViaTriangles} we need lemmas which are analogous to Lemmas \ref{concatenlemma} and \ref{summationLemma}. We provide only the short sketches of proofs to the lemmas, since the proofs are similar to the respective results from the previous sections.

\begin{lemma}[\textbf{Concatenation lemma for semimetric spaces satisfying $K$-rpi}]\label{ConcatenationForSpaces}
	
	Let $(X_1,d_1)$, $(X_2,d_2)$ be a pair of disjoint, semimetric spaces satisfying property (P) with relaxation constants $K_1$, $K_2$. Assume that $\diam_{d_1}(X_1)=r_1<\infty$ and $\diam_{d_2}(X_2)=r_2<\infty$.	
	Let $X:=X_1\cup X_2$. There exists an extension of $d_1,\ d_2$, namely $d:X\times X \to [0,+\infty)$ which satisfies $K$-rpi, where $K:=\max \{K_1,K_2\}$. Additionally, $\diam_d(X)=\max\{r_1, r_2\}$.
\end{lemma}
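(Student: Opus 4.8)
The plan is to mimic the construction in Lemma \ref{concatenlemma}, but to verify the (stronger) polygonal inequality rather than only the triangle-type inequality. First I would set $r:=\max\{r_1,r_2\}$ and $K:=\max\{K_1,K_2\}$, and define the extension $d$ on $X=X_1\cup X_2$ by $d(x,y)=d_i(x,y)$ when $x,y\in X_i$ and $d(x,y)=\tfrac{r}{1+K}$ otherwise; as in the previous lemma, disjointness of $X_1,X_2$ makes this well-defined, and since $X_1\cup X_2$ has at least three elements (or, if one prefers, since we just need $r>0$, which holds because the cross-distance $\tfrac{r}{1+K}$ is positive as soon as one of the spaces is non-degenerate) the pair $(X,d)$ is a semimetric with $\diam_d(X)=r$. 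The choice of the cross-value $\tfrac{r}{1+K}$ is exactly the one that made the three-point estimates work before, so the expectation is that it will again be the right value here.

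Next I would check the $K$-rpi, i.e. that for any $x_0,\dots,x_n\in X$ one has $d(x_0,x_n)\le K\sum_{i=1}^n d(x_{i-1},x_i)$. The chain $x_0,\dots,x_n$ partitions into maximal blocks lying entirely in $X_1$ or entirely in $X_2$; consecutive blocks are joined by a "crossing" edge whose $d$-length is $\tfrac{r}{1+K}$. I would distinguish two cases. If $x_0$ and $x_n$ lie in the same part, say $X_1$, then either the whole chain stays in $X_1$ — in which case the inequality is just the $K_1$-rpi of $(X_1,d_1)$ and we are done since $K\ge K_1$ — or the chain leaves $X_1$ at least once and hence uses at least two crossing edges, so $\sum_{i=1}^n d(x_{i-1},x_i)\ge 2\cdot\tfrac{r}{1+K}$, and then $K\sum_{i=1}^n d(x_{i-1},x_i)\ge \tfrac{2Kr}{1+K}\ge r\ge d(x_0,x_n)$, using $\tfrac{2K}{1+K}\ge 1$ for $K\ge1$. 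If $x_0$ and $x_n$ lie in different parts, then the chain uses at least one crossing edge, so $\sum_{i=1}^n d(x_{i-1},x_i)\ge \tfrac{r}{1+K}$, whence $K\sum_{i=1}^n d(x_{i-1},x_i)\ge \tfrac{Kr}{1+K}\ge d(x_0,x_n)=\tfrac{r}{1+K}$ (indeed $K\ge 1$ suffices). This exhausts all cases.

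The one point that needs a touch of care — and which I expect to be the only real obstacle — is the case where $x_0,x_n\in X_1$ but the chain genuinely wanders into $X_2$: one must be sure that leaving and returning forces \emph{two} crossing edges rather than one, which is immediate since crossings alternate the part, and that $\tfrac{2K}{1+K}\ge 1$, which holds for every $K\ge1$; here the freedom in the denominator is exactly $1+K$ and not, say, $2K$, because with $2K$ the single-crossing estimate $\tfrac{K r}{2K}=\tfrac r2<r$ could fail when $d(x_0,x_n)=r$. Since the previous lemma already handled the finite case with the same constant, and the rpi with $n=2$ degenerates to the ($K$-TT)-type inequality, consistency with Lemma \ref{concatenlemma} is automatic. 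Finally, $\diam_d(X)=r=\max\{r_1,r_2\}$ is read off directly from the definition of $d$, completing the proof.
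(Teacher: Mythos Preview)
Your argument is correct and follows the same plan as the paper: define a constant cross-distance, then split into the cases ``endpoints in the same part'' (either the chain stays inside, or it crosses at least twice) and ``endpoints in different parts'' (at least one crossing edge), bounding $d(x_0,x_n)$ by the crossing contributions alone. The only difference is cosmetic: the paper sets the cross-distance to $\tfrac{r}{2K}$ rather than your $\tfrac{r}{1+K}$, and your aside that $2K$ would fail is mistaken --- in the single-crossing case the endpoints lie in different parts, so $d(x_0,x_n)=\tfrac{r}{2K}$ (not $r$), and the bound $K\cdot\tfrac{r}{2K}=\tfrac{r}{2}\ge\tfrac{r}{2K}$ goes through. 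Either constant works; your choice simply matches Lemma~\ref{concatenlemma} verbatim.
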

\begin{proof}
Let us begin with defining $d$ as 
	\begin{equation}\label{definitionConcatenation2}
	d(x,y):=\begin{cases}
	d_1(x,y), & x,y\in X_1\\
	d_2(x,y), & x,y\in X_2\\
	\frac{\max\{r_1,r_2\}}{2K}, & \text{everywhere else,}
	\end{cases}
	\end{equation}
where $K:=\max\{r_1,r_2\}$.

One can immediately notice, that such space $(X,d)$ will have its diameter at most $r:=\max\{r_1,r_2\}$. What is left to prove is that $d$ fulfills the property $K$-rpi. Let $n\in\N$ and $x_1, \dots, x_n\in X$.

The cases where both $x_1$ and $x_n$ belong to the same $X_i$ are relatively obvious, since in the cases where all remaining $x_k$, $1\leqslant k <n$ belong to the same component, the respective relaxed polygonal property steps in. At the same time, if there is at least one $x_{k_1}$ from the second component, then
\[
d(x_1,x_n)\leqslant r_i \leqslant 2K \cdot \frac{r}{2K} \leqslant K\cdot \sum_{i=1}^{n} d(x_{i-1},x_i).
\]
The last inequality follows from the fact, that the polygon consisting of points $x_1,\dots,x_n$ consists of at least two indices, $1\leqslant i_0,i_1 < n$, for which $x_{i_0}\in X_i$ and $x_{i_0+1}\notin X_i$, as well as $x_{i_1}\notin X_i$ and $x_{i_1+1}\in X_i$.

If $x_1, x_n$ belong to distinct components, then there exists $1\leqslant k< n$ such that $x_k$ belongs to the same component as $x_1$ and $x_{k+1}$ does not. Therefore
\[
\frac{r}{2K} = d(x_1,x_n) \leqslant K\cdot d(x_k,x_{k+1}) \leqslant K\cdot \sum_{i=1}^n d(x_{i-1},x_i).
\]
This finishes the proof of the fact that $(X,d)$ satisfies $K$-rpi.
\end{proof}

\begin{lemma}[\textbf{Summation lemma for semimetric spaces satisfying $K$-rpi}]\label{SummationForPSpaces}

Let $\T$ be a linearly ordered set and $\F := \left\{(X_t,d_t) \; : \; t\in\T\right\}$ be an increasing family of semimetric spaces satisfying $K$-rpi with a fixed constant $K\geqslant 1$. Then a semimetric space $(X,d)$ given by 
\[
X:=\bigcup_{t\in \T} X_t \qquad \qquad d(x,y):=d_p(x,y), \text{ where } p \text{ is any index s.t. } x,y\in X_p
\] 
satisfies $K$-rpi as well.
\end{lemma}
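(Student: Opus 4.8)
The plan is to mirror the proof of Lemma~\ref{summationLemma} almost verbatim; the only new feature here is that the inequality (P) ranges over an arbitrary finite tuple of points rather than just three, so the argument ``find a single index that sees all the points'' has to accommodate $n+1$ points instead of three. First I would record that $d$ is well defined: since $\F$ is increasing, for $x,y\in X$ any two indices $p,q$ with $x,y\in X_p\cap X_q$ give $d_p(x,y)=d_q(x,y)$, because $d_s\subseteq d_t$ whenever $s\leqslant t$ (compare both via $X_{\min\{p,q\}}$). The semimetric axioms then follow exactly as before: $d(x,y)=0$ forces $d_t(x,y)=0$ for some $t\in\T$, hence $x=y$ by (S1) in $X_t$, and symmetry is inherited pointwise from each $d_t$.

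For the $K$-rpi, fix $n\in\N$ and points $x_0,x_1,\dots,x_n\in X$. For each $i$ pick $t_i\in\T$ with $x_i\in X_{t_i}$. Since $\T$ is linearly ordered, the nonempty finite set $\{t_0,t_1,\dots,t_n\}$ attains a maximum, say $t_\ast$, and since $\F$ is increasing we get $X_{t_i}\subseteq X_{t_\ast}$ for every $i$; hence all the points lie in $X_{t_\ast}$ and $d$ coincides with $d_{t_\ast}$ on pairs among them. Invoking the $K$-rpi valid in $(X_{t_\ast},d_{t_\ast})$,
\[
d(x_0,x_n)=d_{t_\ast}(x_0,x_n)\leqslant K\sum_{i=1}^{n} d_{t_\ast}(x_{i-1},x_i)=K\sum_{i=1}^{n} d(x_{i-1},x_i),
\]
which is precisely (P) for $(X,d)$ with the same constant $K$.

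There is essentially no genuine obstacle here; the only point that needs to be named — and it is the same point that is used tacitly in Lemma~\ref{summationLemma} — is the existence of the common index $t_\ast$, which holds simply because a nonempty finite subset of a linearly ordered set has a greatest element. Accordingly I would keep the written-up argument at the level of the short sketch promised before the statement, merely flagging that the passage from three points to $n+1$ points changes nothing.
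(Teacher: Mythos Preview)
Your proposal is correct and follows essentially the same approach as the paper, which simply remarks that the argument of Lemma~\ref{summationLemma} carries over verbatim because property (P) still involves only finitely many points. You have merely spelled out the details (well-definedness of $d$, the semimetric axioms, and the existence of a common index $t_\ast$ for the $n+1$ points), which is exactly what the paper's one-line sketch intends.
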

\begin{proof}
Since (P) property considers only finite sequences of points (albeit of arbitrary length), then the proof of this lemma is almost exactly the same as of the Lemma \ref{summationLemma}.
\end{proof}

Using these definitions and lemmas, one can formulate and prove the following theorem, analogous to the Theorem \ref{extendedViaTriangles}.

\begin{theorem}[\textbf{Characterization of (P)-preserving mappings}]\label{SecondMainTheorem}

Consider an amenable function $f:[0,+\infty)\to [0,+\infty)$. Then $f$ is (P)-preserving iff for any $K_1\geqslant 1$ there exists $K_2$ such that $f$ maps any $K_1$-relaxed polygon to $K_2$-relaxed polygon, i.e.
\begin{eqnarray}
\forall_{K_1\geqslant 1} \ \exists_{K_2 \geqslant 1} \ \Bigg( \forall_{n\in\N} \ \forall_{a_1,\dots,a_n \in [0,+\infty)} \
\left( \ \left(1+K_1\right)\cdot \max\{a_i \ : \ i\leqslant n \} \; \leqslant \; K_1\cdot \sum_{1\leqslant i\leqslant n}a_i \ \right)\nonumber  
\\\label{K1K2condition}
\implies \left( \left(1+K_2\right)\cdot \max\{f(a_i) \ : \ 	i\leqslant n \} \; \leqslant \; K_2\cdot \sum_{1\leqslant i\leqslant n}f(a_i) \right) \ \Bigg).
\end{eqnarray}
\end{theorem}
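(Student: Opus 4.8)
The plan is to mimic the structure of the proof of Theorem~\ref{extendedViaTriangles}, replacing triangle triplets by $K$-relaxed polygons and using Lemmas~\ref{implement}, \ref{ConcatenationForSpaces} and~\ref{SummationForPSpaces} in place of their triangular counterparts. The statement is an equivalence, so I would split it into sufficiency and necessity.

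For sufficiency, assume $f$ satisfies the displayed condition \eqref{K1K2condition} and let $(X,d)$ be a semimetric space satisfying $K_1$-rpi. First I would note that $f\circ d$ is a semimetric: amenability of $f$ gives (S1), and (S2) is immediate since $f\circ d(x,y)=f(d(x,y))=f(d(y,x))=f\circ d(y,x)$. For the polygonal inequality, fix $n\in\N$ and $x_0,x_1,\dots,x_n\in X$. The tuple consisting of $d(x_0,x_n)$ together with $d(x_{i-1},x_i)$ for $1\le i\le n$, once sorted into nonincreasing order, is a $K_1$-relaxed polygon (this is exactly the $K_1$-rpi condition written in the ``$(1+K_1)a_1\le K_1\sum a_i$'' form from the observation just before Definition on implementation, together with Remark~\ref{remmar} that allows arbitrary ordering). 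Applying \eqref{K1K2condition} to these values yields that $f(d(x_0,x_n)),f(d(x_{i-1},x_i))$ form a $K_2$-relaxed polygon, which after unwinding the ``$(1+K_2)\max\le K_2\sum$'' reformulation gives precisely $f(d(x_0,x_n))\le K_2\sum_{i=1}^n f(d(x_{i-1},x_i))$, i.e.\ $(X,f\circ d)$ satisfies $K_2$-rpi. Since $K_2$ depends only on $K_1$, this is uniform over all sequences, so $f\in P_P$.

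For necessity, I argue by contradiction exactly as in Lemma~\ref{mainlemma}. Suppose $f$ is (P)-preserving but the right-hand side of the equivalence fails: there is some $K_1\ge 1$ such that for every $K_2\ge 1$ — in particular for every $m\in\N$ — there is a $K_1$-relaxed polygon $A_m=(a^{(m)}_1,\dots,a^{(m)}_{n_m})$ whose image $(f(a^{(m)}_1),\dots,f(a^{(m)}_{n_m}))$ is not an $m$-relaxed polygon. By Lemma~\ref{implement}, for each $m$ there is a finite semimetric space $(Y_m,e_m)$ on $n_m$ points satisfying $K_1$-rpi and implementing $A_m$; being finite it has finite diameter. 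Using Lemma~\ref{ConcatenationForSpaces} inductively I would glue these into an increasing family of $K_1$-rpi spaces (taking disjoint copies, e.g.\ tagging the $m$-th space's points by $m$ as in the proof of Lemma~\ref{mainlemma}), and then Lemma~\ref{SummationForPSpaces} produces a single semimetric space $(X,D)$ satisfying $K_1$-rpi which contains an isometric copy of every $(Y_m,e_m)$. Then $(X,f\circ D)$ cannot satisfy $K_2$-rpi for any $K_2$: choosing $m\ge K_2$ and looking at the implementing points of $A_m$ inside $X$, the failure of $(f(a^{(m)}_i))_i$ to be an $m$-relaxed polygon translates directly into a violation of the $m$-relaxed polygonal inequality (hence of the $K_2$-one) for that finite sequence of points, just as in Lemma~\ref{mainlemma}. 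This contradicts $f\in P_P$.

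The main obstacle is essentially bookkeeping rather than a deep difficulty: one must be careful that the concatenation in Lemma~\ref{ConcatenationForSpaces}, applied iteratively, genuinely preserves both the constant $K_1$ and (via finite diameters at each stage) the applicability of the lemma, and that ``implements'' survives passing to the union — but this is immediate since the relevant edge-distances are never altered by the extension. A secondary point worth stating explicitly is the equivalence between the ``$a_1\le K\sum_{i\ge 2}a_i$'' and ``$(1+K)a_1\le K\sum_{i\ge 1}a_i$'' forms of the defining inequality (already observed in the text), so that the condition \eqref{K1K2condition} matches Definition~\ref{defpolygon} on the nose.
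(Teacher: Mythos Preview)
Your proposal is correct and follows essentially the same route as the paper's own proof: sufficiency is obtained by reading off the $K_2$-rpi for $(X,f\circ d)$ from the polygon condition applied to the tuple of consecutive distances, and necessity is proved by the same contradiction scheme, building a single $K_1$-rpi space out of implementing spaces for a sequence of bad polygons via Lemmas~\ref{implement}, \ref{ConcatenationForSpaces} and~\ref{SummationForPSpaces}. The only differences are cosmetic (you spell out the amenability/semimetric step and the bookkeeping caveats more explicitly), so there is nothing to add.
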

\begin{proof}
We begin by showing the sufficiency of the proposed condition. Assume that \eqref{K1K2condition} holds. Let $(X,d)$ be any semimetric space satisfying $K_1$-relaxed polygonal inequality. Consider a finite sequence of points $x_1,\dots, x_n\in X$. Since $d$ satisfies $K_1$-rpi, then the tuple
\[A:=\left( d\left(x_1,x_n\right),  d\left(x_1,x_2\right),d\left(x_2,x_3\right), \dots, d\left(x_{n-1},x_{n}\right) \right)\]
is (up to an order of elements) a $K_1$-relaxed polygon. Therefore, the values of $f$ at these  respective tuple
\[
A_f:=\left(f\circ  d\left(x_1,x_n\right), f\circ  d\left(x_1,x_2\right),f\circ  d\left(x_2,x_3\right), \dots, f\circ d\left(x_{n-1},x_{n}\right) \right)
\]
can be arranged into a $K_2$-relaxed polygon. This implies that
\begin{eqnarray*}
f\circ d (x_1,x_n) &\leqslant & \max A_f \leqslant K_2 \cdot \left( f\circ d (x_1,x_n) +\sum_{i=2}^n f\circ d\left(x_{i-1},x_{i}\right)\right) - K_2 \max A_f\\
&\leqslant & K_2 \cdot \left( f\circ d (x_1,x_n) +\sum_{i=2}^n f\circ d\left(x_{i-1},x_{i}\right)\right) - K_2 f\circ d (x_1,x_n)\\
&=& K_2 \cdot  \sum_{i=2}^n f\circ d\left(x_{i-1},x_{i}\right),
\end{eqnarray*} 
so once again we are able to observe that the particular order in which the elements appear in $A_f$ does not matter as long as the condition (P) holds.

Since the points $x_1,\dots,x_n$ were chosen arbitrarily, $(X,f\circ d)$ satisfies $K_2$-rpi.

The proof of the other implication goes analogously to the scheme from the proof of Theorem \ref{extendedViaTriangles}. Fix function $f$. If we suppose the contrary, we obtain the existence of such $K_1\geqslant 1$ that for every $K_2\geqslant 1$ there exists some $K_1$-relaxed polygon which is not mapped to a $K_2$-relaxed polygon (albeit again it might be a $K_2'$-relaxed polygon for some greater constant $K_2'$). Consider a sequence of such $K_1$ polygons $A_n$. Due to Lemma \ref{implement}, this implies the existence of spaces $(X_n,d_n)$, all of which satisfy $K_1$-rpi. Therefore, 
using Lemma \ref{ConcatenationForSpaces} we obtain an increasing sequence of spaces with (P)-property with the same constant $K_1$. What remains is to sum them up using Lemma \ref{SummationForPSpaces}, obtaining a large space $(X,d)$ which satisfies $K_1$-rpi as well. The function $f$ fails to preserve property (P) due to the fact that for any natural number $n$ there exists a set of points $\{x_1,\dots,x_k\}$ (coming from the space $X_n$) in $X$ such that $A_n:=\left(d(x_1,x_k),d(x_1,x_2),\dots,d(x_{k-1},x_k) \right)$ is a $K$-relaxed polygon. Therefore
\[
B:=\left(f\circ d(x_1, x_k), \dots, f\circ d(x_{k-1},x_k) \right)
\]
does not form a $n$-relaxed polygon and thus $(X,f\circ d)$ fails to satisfy $n$-rpi. 
\end{proof}

It is also possible to investigate some results concerning families $P_{P,S}, P_{P,B}, P_{P,M}$ as well as $P_{S,P}, P_{B,P}, P_{M,P}$. The problem of obtaining analogous results for such classes of spaces is left as another open problem for the Readers.

This part of the paper would be somewhat lacking without any examples of functions in $P_{P}$. But before we provide any such characterization, we suggest some easy to check sufficient conditions for a function to be (P)-preserving. We will need the characterization of spaces satisfying $K$-rpi, which is due to Fagin et al. \cite{Fagin}.
\begin{theorem}[\textbf{Characterization of $K$-rpi spaces via metric bounds}]\label{faagiin}

Let $(X,d)$ be a semimetric space. The following are equivalent:
\begin{itemize}
\item[(i)] $(X,d)$ satisfies (P) with relaxation constant $K\geqslant 1$;
\item[(ii)] there exists a metric $d'$ on $X$ such that for all $x,y\in X$ 
\begin{equation}\label{equationes}
d'(x,y)\leqslant d(x,y) \leqslant K\cdot d'(x,y)
\end{equation}
\end{itemize}
\end{theorem}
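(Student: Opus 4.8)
The plan is to prove the two implications of Theorem~\ref{faagiin} separately, with the harder direction being (i)$\Rightarrow$(ii), where we must actually manufacture the dominating metric $d'$.

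For the easy direction (ii)$\Rightarrow$(i), assume a metric $d'$ with $d'(x,y)\leqslant d(x,y)\leqslant K d'(x,y)$ exists. Given any $n\in\N$ and $x_0,\dots,x_n\in X$, I would chain the two inequalities together with the triangle inequality for $d'$:
\[
d(x_0,x_n)\leqslant K\, d'(x_0,x_n)\leqslant K\sum_{i=1}^n d'(x_{i-1},x_i)\leqslant K\sum_{i=1}^n d(x_{i-1},x_i),
\]
which is exactly condition (P) with relaxation constant $K$. This is a two-line computation.

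For the main direction (i)$\Rightarrow$(ii), the natural candidate for $d'$ is the \emph{path metric} (or ``shortest-walk'' infimum) induced by $d$: for $x,y\in X$ set
\[
d'(x,y):=\inf\left\{\sum_{i=1}^n d(z_{i-1},z_i)\ :\ n\in\N,\ z_0=x,\ z_n=y,\ z_i\in X\right\}.
\]
First I would check that $d'$ is a pseudometric: symmetry is inherited from (S2), the triangle inequality is immediate from concatenating walks, and $d'(x,x)=0$. The upper bound $d'(x,y)\leqslant d(x,y)$ holds trivially by taking the one-step walk $x,y$. The crucial point is the lower bound $d(x,y)\leqslant K\,d'(x,y)$: for any walk $z_0=x,\dots,z_n=y$, the $K$-rpi gives $d(x,y)\leqslant K\sum_{i=1}^n d(z_{i-1},z_i)$, and taking the infimum over all such walks yields $d(x,y)\leqslant K\,d'(x,y)$. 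In particular $d'(x,y)\geqslant d(x,y)/K>0$ for $x\neq y$ by axiom (S1), so $d'$ is genuinely a metric, not merely a pseudometric, and the sandwich \eqref{equationes} holds.

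The main obstacle, and the only subtle point, is verifying that $d'$ does not collapse distances to zero --- i.e.\ exactly the lower bound step above; everything else is routine. It is worth noting that this argument uses the full strength of $K$-rpi (all finite walks), not just the three-point $b$-metric inequality, which is precisely why the theorem is stated for (P) rather than for (B). One could alternatively present this more concretely using the polygonal structure already set up in Lemma~\ref{implement}, but the path-metric construction is cleaner and self-contained, so that is the route I would take.
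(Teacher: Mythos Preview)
Your proof is correct and is exactly the standard path-metric argument. Note, however, that the paper does not supply its own proof of this statement: Theorem~\ref{faagiin} is quoted from Fagin, Kumar and Sivakumar \cite{Fagin} and used as a black box, so there is no in-paper proof to compare against. Your construction of $d'$ as the infimum over finite $d$-walks is the natural one (and is, in essence, how the result is proved in the original reference), and every step you outline---pseudometric axioms, the sandwich bounds, and the use of (S1) together with the lower bound to upgrade $d'$ from a pseudometric to a metric---goes through without issue.
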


Having this characterization at our disposal we can proceed with the following result.

\begin{lemma}[\textbf{Sufficient conditions for (P)-preservation}]

Let $f:[0,+\infty)\to[0,+\infty)$. If there exist $a,b\in \R$ such that $ax\leqslant f(x) \leqslant bx$ for all $x\geqslant 0$, then 
$f\in P_P$.
\end{lemma}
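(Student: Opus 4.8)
The plan is to exploit the Fagin et al.\ characterization (Theorem \ref{faagiin}) to reduce (P)-preservation to the already-established metric-preservation toolbox from Lemma \ref{huek}. First I would dispose of degenerate cases: if $a\leqslant 0$ the lower bound $ax\leqslant f(x)$ is automatic and we may assume $a=0$ without losing anything, so the essential content is $0\leqslant f(x)\leqslant bx$ together with some positive lower control; in fact, to have $f\circ d$ be a semimetric at all we need $f$ amenable, so I would first note that the hypothesis forces $f(0)=0$ (from $f(0)\leqslant b\cdot 0$) and, to get $f(x)>0$ for $x>0$, one really does want $a>0$ — I would therefore state the argument under the reading $0<a\leqslant b<\infty$, which is the interesting one, and remark that $f$ is then amenable.

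Next, given an arbitrary semimetric space $(X,d)$ satisfying $K$-rpi, apply Theorem \ref{faagiin} to obtain a genuine metric $d'$ on $X$ with $d'\leqslant d\leqslant K d'$. The key step is to produce, from $f$, a metric-preserving function that lets us sandwich $f\circ d$ between two metrics. Here I would invoke Lemma \ref{huek}: since $f$ is amenable and satisfies $ax\leqslant f(x)\leqslant bx$, one checks $f$ is quasi-subadditive (indeed $f(s+t)\leqslant b(s+t)\leqslant \tfrac{b}{a}(f(s)+f(t))$), but more to the point I would argue directly. Define $d_1 := a\, d'$ and note $d_1$ is a metric; then for all $x,y$,
\[
d_1(x,y) = a\,d'(x,y) \leqslant a\,d(x,y) \leqslant f\circ d(x,y) \leqslant b\,d(x,y) \leqslant bK\, d'(x,y) = \tfrac{bK}{a}\, d_1(x,y).
\]
Thus $f\circ d$ is sandwiched between the metric $d_1$ and $\tfrac{bK}{a}d_1$. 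Before concluding I must check $f\circ d$ is actually a semimetric: symmetry is inherited from $d$, and $f\circ d(x,y)=0\iff d(x,y)=0\iff x=y$ using amenability of $f$ together with (S1) for $d$. Then applying Theorem \ref{faagiin} in the reverse direction to the pair $(f\circ d, d_1)$ shows $(X,f\circ d)$ satisfies $K'$-rpi with $K' = bK/a$.

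The one subtlety — and the step I expect to need the most care — is that the relaxation constant $K' = bK/a$ depends on $K$, so what we have literally shown is that $f$ maps a $K$-rpi space to a $(bK/a)$-rpi space; to land inside $P_P$ in the sense of Definition (the $(A_1)$-$(A_2)$-preserving definition with $A_1=A_2=(P)$, where the constant is allowed to vary) this is exactly what is required, since being a semimetric satisfying \emph{some} relaxed polygonal inequality is the property (P) unquantified over $K$. I would make this explicit: $f\in P_P$ precisely because for \emph{every} $K$ the image space satisfies $(bK/a)$-rpi, matching the quantifier structure of Theorem \ref{SecondMainTheorem} with the explicit choice $K_2 := \tfrac{b}{a}(1+K_1)$ or similar. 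Finally I would remark that the estimate also recovers, via Lemma \ref{huek}(ii) or a direct subadditivity check when $f$ is additionally increasing, that linearly-sandwiched functions are metric preserving, so this lemma is the natural (P)-analogue of that classical sufficient condition.
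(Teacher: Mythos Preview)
Your argument is correct and follows essentially the same route as the paper: invoke Theorem~\ref{faagiin} to obtain a metric $d'$ with $d'\leqslant d\leqslant K d'$, sandwich $f\circ d$ between $a\,d'$ and a constant multiple of $a\,d'$, and apply Theorem~\ref{faagiin} in the reverse direction. Your presentation is in fact slightly more careful than the paper's --- you correctly carry the factor $K$ through the upper bound to get $f\circ d\leqslant \tfrac{bK}{a}\,d_1$ (the paper's displayed chain drops this $K$), and you explicitly check amenability and the semimetric axioms for $f\circ d$; the aside about $K_2:=\tfrac{b}{a}(1+K_1)$ is a small slip (the sandwich gives $K_2=\tfrac{bK_1}{a}$), but your ``or similar'' hedge covers it.
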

\begin{proof}
Assume that $f$ is bounded from both sides by linear functions $x\mapsto ax$ and $x\mapsto bx$. Let $(X,d)$ be any semimetric space satisfying condition (P). From Theorem \ref{faagiin} we obtain the existence of a metric $d'$ on $X$ such that the inequalities \eqref{equationes} hold. Since linear functions belong to $P_M$ due to being subadditive, increasing and amenable (see Lemma \ref{huek}), their compositions with $d'$ are also metrics. Thus we have
\[
ad'(x,y)\leqslant ad(x,y)\leqslant f(d(x,y)) \leqslant bd(x,y) = \frac{b}{a}\cdot ad'(x,y)
\]
Therefore $f\circ d$ is bounded from below by the metric $ad'(x,y)$ and from above by $\frac{b}{a} \cdot ad'(x,y)$. Due to Theorem \ref{faagiin} we have that $f(d(x,y))$ satisfies condition (P). Due to $(X,d)$ being arbitrary, $f\in P_P$.
\end{proof}

This allows us to construct this somewhat nontrivial example of a function from $P_P$.

\begin{example}
Let $f(x):=5x-4\lfloor x \rfloor$, where $\lfloor x \rfloor$ denotes the largest integer not exceeding $x$. 
\begin{figure}[H]
\includegraphics[width=0.7\textwidth]{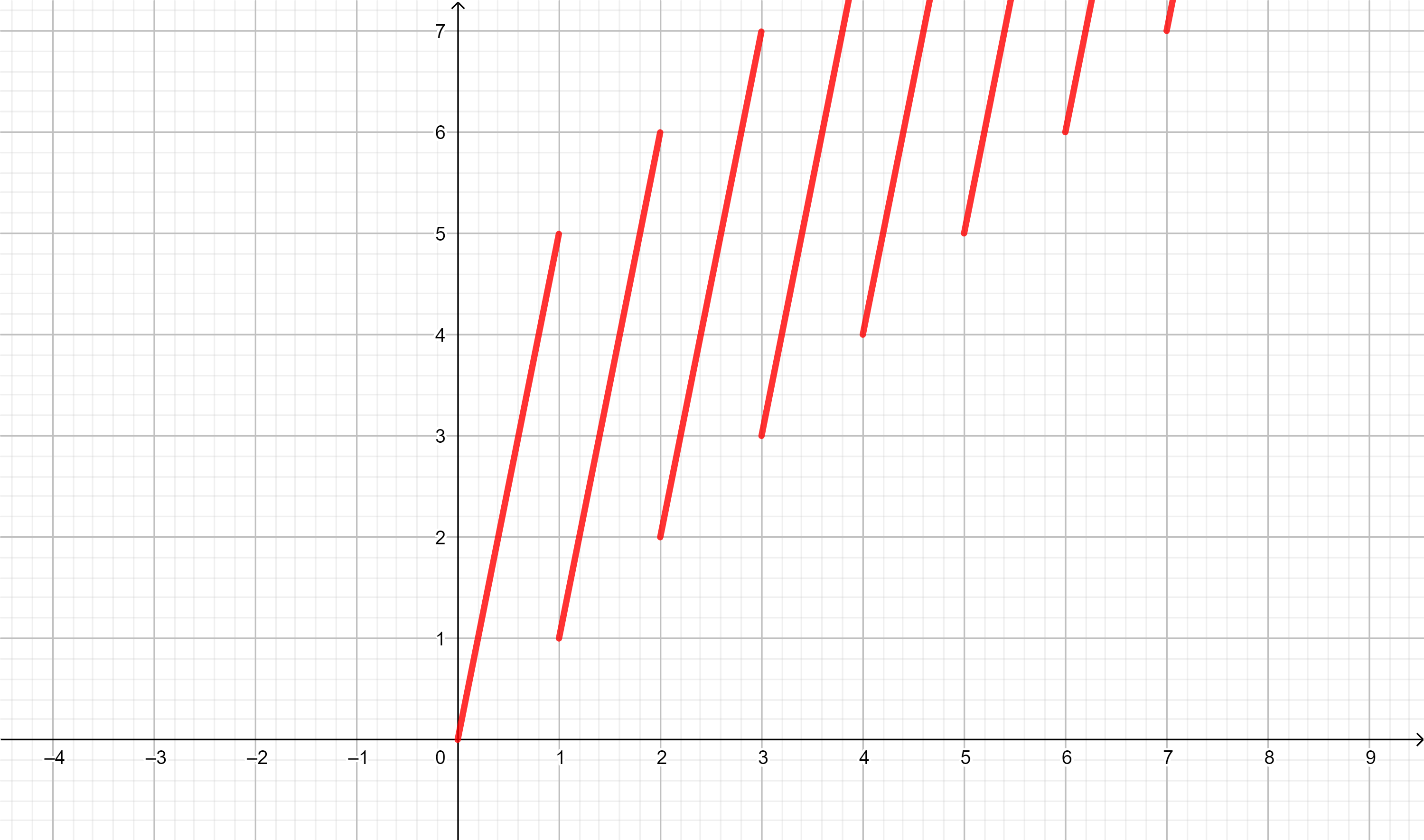}
\caption{A graph depicting \textit{sawtooth} function $f$.}
\end{figure}
Obviously $x\leqslant f(x) \leqslant 5x$, thus $f\in P_P$. One can also notice that $f$ does not belong to $P_M$, as it fails to be subadditive.  
\[
f(\frac{3}{2}) = \frac{15}{2} >f(1)+f\left(\frac{1}{2}\right) = 1+\frac{5}{2} = \frac{7}{2}.
\]
From Lemma \ref{imahusk} we obtain that $f\notin P_{M}$.
\end{example}

\begin{remark}
As a conclusion we obtain that although $P_M$ is not disjoint with $P_P$ (as identity belongs to both of those classes) we have $P_M\neq P_P$.
\end{remark}

\section{Acknowledgments}

%	The author would like to thank the reviewers for all the comments which led to the increased quality of the paper. 
	As usual,  I would like to express my utmost gratitude to both of my supervisors, that is Jacek Jachymski and Mateusz Krukowski for multitude of fruitful discussions which included priceless mathematical and writing tips. Lastly, I would like to acknowledge Piotr Nowakowski and Mateusz Lichman for adding some interesting comments on the topic.


\begin{thebibliography}{99}
	\bibitem{VanAn}
	An, T.V., Dung, N.V., Kadelburg, Z. Radenovi\'c, S.
	{Various generalizations of metric spaces and fixed point theorems}.
	Rev. R. Acad. Cienc. Exactas F\`is. Nat. Ser. A Mat. RACSAM 
	109, 175--198 (2015)

	\bibitem{Bessenyei}
	Bessenyei, M., P\'{a}les, Z.: 
	{A contraction principle in semimetric spaces}.
	J. Nonlinear Convex Anal.
	18, 515--524 (2015)
	
	\bibitem{blumetal36}
	Blumenthal, L.M.: 
	{Remarks concerning the Euclidean four-point property}.
	Ergebnisse Math. Kolloq. Wien
	7, 7--10 (1936)
	
	\bibitem{BorsikDobos}
	Bors\'{i}k, J., Dobo\v{s}, J.:
	{Functions the composition with a metric of which is a metric}.
	Math. Slovaca
	31, 3--12 (1981)
	
	\bibitem{Doborsik}
	Bors\'{i}k, J., Dobo\v{s}, J.:
	{On metric preserving functions}.
	Real Anal. Exchange
	13, 285--293 (1988)

	\bibitem{Chit}
	Chittenden, E.W.: 
	{On the equivalence of \`ecart and voisinage}.
	Trans. Amer. Math. Soc. 
	18, 161--166 (1917) 
	 
	\bibitem{KCFTJJ}
	Chrz\c{a}szcz, K., Jachymski, J., Turobo\'{s}, F.:
	{On characterizations and topology of regular semimetric spaces}.
	Publ. Math. Debrecen
	93, 87--105 (2018)
	
	\bibitem{Cobzas}
	Cobza\c{s}, S.: 
	{B-metric spaces, fixed points and Lipschitz functions}. 
	arXiv:1802.02722v2
	35 pp. (2018)

	\bibitem{Corazza1999}
	Corazza, P.:
	{Introduction to metric-preserving functions}.
	Amer. Math. Monthly
	106, 309--323 (1999)
		
	\bibitem{DobosLectures}
	Dobo\v{s}, J.:
	{Metric preserving functions}.\\
	\verb|http://web.science.upjs.sk/jozefdobos/wp-content/uploads/2012/03/mpf1.pdf|
	(1998). Accessed 1 January 2020
	
	\bibitem{DobosEuclidean}
	Dobo\v{s}, J.:
	{On modifications of the Euclidean metric on reals}.
	Tatra Mt. Math. Publ.
	8, 51--54 (1996)	
	
	\bibitem{DobosPiotrowskiRemarks}
	Dobo\v{s}, J., Piotrowski, Z.:
	{Some remarks on metric preserving functions}.
	Real Anal. Exchange
	19, 317--320 (1994)
	
	\bibitem{DoboPiotro}	
	Dobo\v{s}, J., Piotrowski, Z.:
	{A note on metric preserving functions}.
	Int. J. Math. Math. Sci.
	19, 199--200 (1996)
	
	\bibitem{Dobos_Edu}
	Dobo\v{s}, J., Piotrowski, Z.:
	{When distance means money}.
	Internat. J. Math. Ed. Sci. Tech.
	28, 513--518 (1997)
	
	\bibitem{DovgosheyCombinatorial}
	Dovgoshey, O.:
	{Combinatorial properties of ultrametrics and generalized ultrametrics}.
	arXiv:1908.08349, 
	41 pp. (2019)
	
	\bibitem{Dovgoshey}
	Dovgoshey, O.:
	{On ultrametric-preserving functions}.
	arXiv:1902.08747,
	9 pp. (2019)
	
	\bibitem{Fagin}
	Fagin, R., Kumar, R., Sivakumar, D.: 
	{Comparing top k lists}. 
	SIAM J. Discrete Math.
	17, 134--160 (2003)

	\bibitem{Frink}
	Frink, A.H.: 
	{Distance functions and the metrization problem}. 
	Bull. Am. Math. Soc. 
	43, 133--142 (1937)
	
	\bibitem{racsam}
	Jachymski, J., Turobo\'s, F.:	
	{On functions preserving regular semimetrics and quasimetrics satisfying 		the relaxed polygonal inequality}.
	RACSAM, 114, 159 (2020)	
	
	\bibitem{Karapinar}
	Karapinar, E.:
	{A Short Survey on the Recent Fixed Point Results on b-Metric
Spaces}.
	Constr. Math. Anal.
	1(1), 15--44 (2018)	
	
	\bibitem{Kelley}
	Kelley, J.L.:
	{General Topology}.
	Van Nostrand, New York, (1955)	
	
	\bibitem{Tammatada2018}
	Khemaratchatakumthorn, T., Pongsriiam, P.:
	{Remarks on b-metric and metric-preserving functions}.
	Math. Slovaca
	68, 1009--1016 (2018)
	
	\bibitem{Pongsriiam}
	Khemaratchatakumthorn, T., Pongsriiam, P., Samphavat, S.:
	{Further remarks on b-metrics, metric-preserving functions, and other related metrics}.
	Int. J. Appl. Math. Comput. Sci.
	14, 473-480 (2019)
	
	\bibitem{wdistances}
	Khemaratchatakumthorn, T., Termwuttipong, I.:
	{Metric-preserving functions, w-distances and Cauchy w-distances}.
	Thai J. Math.
	5, 51--56 (2012)
	
	\bibitem{Kirk2013}
	Kirk, W.A., Shahzad, N.:
	{Remarks on metric transforms and fixed-point theorems}.
	Fixed Point Theory Appl.
	2013:106, 11 pp. (2013)
	
	\bibitem{KirkShahzad}
	Kirk, W.A., Shahzad, N.: 
	{Fixed Point Theory in Distance Spaces}.
	Springer, Cham (2014)
	
	\bibitem{Petrusel2013}
	Petru\c{s}el, A., Rus, I.A., \c{S}erban, M.A.:
	{The role of equivalent metrics in fixed point theory}.
	Topol. Methods Nonlinear Anal.
	41, 85--112 (2013)

	\bibitem{Pokorny}
	 Pokorn\'{y}, I.: 
	 {Some remarks on metric preserving functions}. 
	 Tatra Mt. Math. Publ.
	 2 (1993), 65--68

	\bibitem{Pongsriiam2014}
	Pongsriiam, P., Termwuttipong, I.:
	{On metric-preserving functions and fixed point theorems}.
	Fixed Point Theory Appl.
	2014:179, 14 pp. (2014)
	
	\bibitem{Termutti}
	Pongsriiam, P., Termwuttipong, I.:
	{Remarks on ultrametrics and metric-preserving functions}.
	Abstr. Appl. Anal.
	Art. ID 163258, 1--9 (2014)
	
	\bibitem{Samphavat}
	Samphavat, S., Khemaratchatakumthorn, T., Pongsriiam, P.:
	{Remarks on b-metrics, ultrametrics, and metric-preserving functions}.
	Math. Slovaca
	70, 61--70 (2020)
	
	\bibitem{srenivasan}
	Sreenivasan, T.K.:
	{Some properties of distance functions}.
	J. Indian Math. Soc.
	11, 38--43 (1947)
	
	\bibitem{VallinSubset}
	Vallin, R.W.: 
	{A subset of metric preserving functions}. 
	Internat. J. Math. \& Math. Sci.
	 21(2), 409--410 (1998)

	\bibitem{VallinDeriv}	
	Vallin, R.W.:  
	{On metric preserving functions and infinite derivatives}.
	Acta Math. Univ. Comenianae 
	67(2), 373--376 (1998)	
	
	\bibitem{VallinCont}
	Vallin, R.W.: 
	{Continuity and differentiability aspects of metric preserving functions}. 
	Real Anal. Exch. 
	25(2), 849--868 (1999)
	
	\bibitem{Wilson}
	Wilson, W.A.:
	{On certain types of continuous transformations of metric spaces}.
	Amer. J. Math.
	57, 62--68 (1935)	
	
	\bibitem{WilsonReg}
	Wilson, W.A.:
	{On semi-metric spaces}.
	Amer. J. Math.
	53, 361--373 (1931)
	
\end{thebibliography}
\end{document}